\documentclass[11pt]{article}
\usepackage[T1]{fontenc}
\usepackage[utf8]{inputenc}
\usepackage{lmodern}
\usepackage[margin=1in]{geometry}
\usepackage{amsmath,amssymb,amsthm,mathtools}
\usepackage{microtype}
\usepackage{booktabs,array,tabularx}
\usepackage{float}
\usepackage{enumitem}
\usepackage{aliascnt,etoolbox,needspace}
\usepackage[dvipsnames]{xcolor}
\usepackage[numbers,sort&compress]{natbib}
\usepackage{xurl}
\definecolor{LinkBlue}{RGB}{28,66,132}
\definecolor{CiteTeal}{RGB}{0,94,92}
\usepackage[colorlinks=true,linkcolor=LinkBlue,citecolor=CiteTeal,urlcolor=LinkBlue,
  bookmarksnumbered=true,pdfencoding=auto,psdextra]{hyperref}
\usepackage[nameinlink,capitalise,noabbrev]{cleveref}
\hypersetup{pdftitle={A 27 x 27 x 27 counterexample to Comon's Conjecture},pdfauthor={Benjamin Lovitz}}
\providecommand{\doi}[1]{\href{https://doi.org/#1}{\nolinkurl{doi:#1}}}
\providecommand{\arxiv}[1]{\href{https://arxiv.org/abs/#1}{\nolinkurl{arXiv:#1}}}
\setlist[enumerate]{label=\textup{(\roman*)},leftmargin=2.1em,itemsep=2pt,topsep=4pt}
\numberwithin{equation}{section}
\newtheorem{theorem}{Theorem}[section]
\newaliascnt{lemma}{theorem}
\newtheorem{lemma}[lemma]{Lemma}\aliascntresetthe{lemma}
\newaliascnt{proposition}{theorem}
\aliascntresetthe{proposition}
\newaliascnt{corollary}{theorem}
\aliascntresetthe{corollary}
\theoremstyle{definition}
\newaliascnt{definition}{theorem}
\newtheorem{definition}[definition]{Definition}\aliascntresetthe{definition}
\theoremstyle{remark}
\newaliascnt{remark}{theorem}
\newtheorem{remark}[remark]{Remark}\aliascntresetthe{remark}
\crefname{theorem}{Theorem}{Theorems}\Crefname{theorem}{Theorem}{Theorems}
\crefname{lemma}{Lemma}{Lemmas}\Crefname{lemma}{Lemma}{Lemmas}
\crefname{proposition}{Proposition}{Propositions}\Crefname{proposition}{Proposition}{Propositions}
\crefname{corollary}{Corollary}{Corollaries}\Crefname{corollary}{Corollary}{Corollaries}
\crefname{definition}{Definition}{Definitions}\Crefname{definition}{Definition}{Definitions}
\crefname{remark}{Remark}{Remarks}\Crefname{remark}{Remark}{Remarks}
\crefname{equation}{Equation}{Equations}\Crefname{equation}{Equation}{Equations}
\crefname{section}{Section}{Sections}\Crefname{section}{Section}{Sections}
\crefname{table}{Table}{Tables}\Crefname{table}{Table}{Tables}
\BeforeBeginEnvironment{theorem}{\Needspace{7\baselineskip}}
\BeforeBeginEnvironment{proposition}{\Needspace{6\baselineskip}}
\BeforeBeginEnvironment{lemma}{\Needspace{5\baselineskip}}
\BeforeBeginEnvironment{corollary}{\Needspace{5\baselineskip}}
\BeforeBeginEnvironment{definition}{\Needspace{5\baselineskip}}
\BeforeBeginEnvironment{remark}{\Needspace{4\baselineskip}}
\newcommand{\CC}{\mathbb C}
\newcommand{\RR}{\mathbb R}
\newcommand{\QQ}{\mathbb Q}
\newcommand{\kk}{\mathbb F}
\newcommand{\Sym}{S}
\newcommand{\sym}{\Pi}
\newcommand{\rk}{\mathrm{R}}
\newcommand{\srk}{\mathrm{R}_{\mathrm{s}}}
\newcommand{\srkfield}[1]{\mathrm{R}_{\mathrm{s},#1}}
\newcommand{\spn}{\operatorname{span}}

\newcommand{\id}{\operatorname{id}}
\newcommand{\cA}{\mathcal A}

\theoremstyle{definition}
\newaliascnt{claim}{theorem}
\newtheorem{claim}[claim]{Claim}
\aliascntresetthe{claim}
\newaliascnt{conjecture}{theorem}
\newtheorem{conjecture}[conjecture]{Conjecture}
\aliascntresetthe{conjecture}

\title{A $27 \times 27 \times 27$ counterexample to Comon's conjecture}
\author{Benjamin Lovitz\thanks{Concordia University. benjamin.lovitz@concordia.ca}}
\date{\today}

\begin{document}
\maketitle
\begin{abstract}
We report an explicit construction of a $27 \times 27 \times 27$ symmetric tensor with rational entries that has tensor rank $55$ over $\QQ$ and symmetric tensor rank $56$ over $\CC$, providing a small counterexample to Comon's conjecture over $\QQ$, $\RR$, and $\CC$. The construction follows the framework of symmetric adjoins introduced by Shitov. The primary technical contribution of this work is to prove a special case of Shitov's Conjecture 6 in~\cite{Shitov2018}.
\end{abstract}

\textbf{Use of artificial intelligence:} The author used GPT-6 Astra interactively throughout every stage of preparing this manuscript, including to obtain an initial proof of the main result, which the author subsequently verified and simplified. The author takes full responsibility for the correctness, exposition, and attribution in this  manuscript.

\section{Introduction}\label{sec:intro}

Let $\kk \in \{\QQ, \RR, \CC\}$. Given an order-$d$ tensor $T \in (\kk^n)^{\otimes d}$, we say that $T$ is \textit{symmetric} if $T_{i_1,\dots, i_d} = T_{i_{\sigma(1)},\dots, i_{\sigma(d)}}$ for all $i_1,\dots, i_d \in \{1,\dots, n\}$ and permutations $\sigma \in \mathfrak{S}_d$, i.e. the coordinates do not depend on the order of the indices. We say that $T$ has \textit{rank one} if it can be written as $T=v_1\otimes \dots \otimes v_d$ for some nonzero vectors $v_i \in \kk^n$. We say that $T$ has \textit{symmetric rank one} if it can be written as $T=\lambda v^{\otimes d}$ for some nonzero $\lambda \in \kk$ and nonzero $v \in \kk^n$. The \textit{rank} $\rk_{\kk}(T)$ is the smallest $N$ such that $T$ can be written as a sum of $N$ rank one tensors. The \textit{symmetric rank} $\srkfield{\kk}(T)$ is the smallest $N$ such that $T$ can be written as a sum of $N$ symmetric rank one tensors.

It is clear that $\rk_{\kk}(T) \leq \srkfield{\kk}(T)$ for any symmetric tensor $T$. For symmetric matrices (i.e., $d=2$), the rank and symmetric rank are equal. The question of whether equality holds for higher order ($d>2$) symmetric tensors was first asked by Comon, and the assertion that equality holds has come to be known as Comon's conjecture.

Early results establishing equality of rank and symmetric rank in special cases were obtained by Comon et al.~\cite{CGLM2008}, and later in \cite{BB2013,Friedland2016,ZHQ2016,Seigal2019,Seigal2020}, as well as over other fields~\cite{ZHSX2020}. Related questions about partially symmetric ranks were considered in~\cite{BL2013,GOV2019,BC2022}, and border rank analogues were studied in~\cite{BGL2013,Seigal2019,Seigal2020,MandziukVentura2026}. See also~\cite{CMM2018,ShitovMore2024} for more in-depth accounts of prior work.

One (and perhaps, the only) known method of proving counterexamples to Comon's conjecture was introduced by Shitov in~\cite{Shitov2018}, although the specific example presented in that paper turned out to have an error~\cite{Draisma2024,ShitovIllusions2026}. Using this technique, Shitov found an order 4 counterexample over $\RR$~\cite{Shitov2020}. In subsequent works, Wang and Seigal established a real counterexample of order 6~\cite{WS2023}, and Shitov found counterexamples of order 3 in sufficiently large dimension over any infinite field of characteristic not equal to 2 or 3~\cite{ShitovPartII2024}. More recently, Shitov reduced the size of his real order 4 counterexample~\cite{ShitovMonomials2026}, established the existence of real counterexamples in every even order $d \geq 4$~\cite{ShitovMonomials2026}, and established an explicit complex counterexample of order 4~\cite{ShitovFurther2026}. We summarize these results in~\Cref{tab:comparison}.

\begin{table}[tb]
\centering\small
\setlength{\tabcolsep}{5pt}
\begin{tabular}{@{}lccrrrl@{}}
\toprule
Construction & Field & Order & Variables & $\rk$ & $\srk$ \\
\midrule
Shitov \cite{Shitov2020} & $\RR$ & 4 & 208 & 761 & 762 \\
Wang--Seigal \cite{WS2023} & $\RR$ & 6 & 5178 & 30913 & 30914 \\
Shitov \cite{ShitovPartII2024} & $\kk$ & 3 & *
& \multicolumn{2}{c}{$\rk<\srk$} \\
Shitov \cite{ShitovMonomials2026} & $\RR$ & 4 & 190 & 721 & 722 \\
Shitov \cite{ShitovMonomials2026} & $\RR$ &even $d\geq4$ & * & \multicolumn{2}{c}{$\rk<\srk$} \\
Shitov \cite{ShitovFurther2026} & $\CC$ & 4 & 2430 & 9603 & $\ge9604$ \\
\Cref{thm:main} & $\CC$ & 3 & 27 & 55 & 56 \\
\bottomrule
\end{tabular}
\caption{Existing counterexamples to Comon's conjecture. In the third row, $\kk$ is any infinite field of characteristic not equal to 2 or 3. Asterisks denote unspecified integers.}
\label{tab:comparison}
\end{table}

A drawback to these counterexamples is that they require the dimension $n$ to be large ($>100$). Our main result addresses this point in the degree 3 case.
\begin{theorem}\label{thm:main}
There exists a symmetric tensor $T \in (\CC^{27})^{\otimes 3}$ with rational entries for which
\[
 \rk_{\kk}(T)=55<56=\srkfield{\kk}(T)
\]
for every field $\kk \in \{\QQ, \RR, \CC\}$.
\end{theorem}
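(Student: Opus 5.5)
We follow Shitov's symmetric-adjoin framework. Start from a small (non-symmetric or partially structured) tensor $A$, or a linear space of $n\times n$ matrices, whose rank over $\QQ$ and $\CC$ is known. Form a symmetric tensor $T$ in $\Sym^3(\kk^{27})$ by adjoining variables: write $\kk^{27}=U\oplus V\oplus W$ with a block structure, and let
\[
T=\sym\bigl(x\cdot Q(y)\bigr)+(\text{correction terms}),
\]
where $x$ spans a distinguished small block and $Q$ encodes the matrix space from $A$. The symmetric rank is then forced to exceed the rank by one through a term that only a non-symmetric decomposition can absorb. Concretely, the plan has three steps.

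\textbf{Upper bound $\rk_{\QQ}(T)\le 55$.} Exhibit an explicit rational decomposition into $55$ rank-one terms $a_i\otimes b_i\otimes c_i$. The mechanism is that a symmetric block such as $\sym(x\otimes y\otimes z)$ costs $4$ symmetric terms but can be absorbed using non-symmetric terms whose factors are distributed across modes. This decomposition is verified by direct computation. Since $\rk_\CC\le\rk_\RR\le\rk_\QQ$, this gives the upper bound over all three fields. Similarly, an explicit symmetric decomposition with $56$ rational terms gives $\srkfield{\kk}(T)\le 56$ for every $\kk$.

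\textbf{Lower bound $\rk_{\CC}(T)\ge 55$.} Use substitution, i.e.\ the flattening-plus-restriction method. Flatten $T$ to the $27$-dimensional space of slices, and project away the adjoined coordinates one at a time. Each projection drops the rank by at least one provided the corresponding slice direction is not in the span of the remaining terms. After this we are left with a core whose rank equals its flattening rank, or is otherwise computable. Since the bound holds over $\CC$, it holds over $\QQ$ and $\RR$ as well.

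\textbf{Lower bound $\srkfield{\CC}(T)\ge 56$.} This is the main obstacle, and it is exactly the special case of Shitov's Conjecture 6 advertised in the abstract. The conjecture, roughly, says that for a symmetric adjoin, any symmetric decomposition must spend an extra term on the distinguished block. We suppose $T=\sum_{i=1}^{55}\lambda_i v_i^{\otimes 3}$ and derive a contradiction:
\begin{enumerate}
\item Restrict to the coordinates of the adjoined variables. Using the substitution lower bounds above, every decomposition of length $55$ must be tight at each substitution step. Each adjoined coordinate is therefore "used" by a specific summand.
\item Because the summands are symmetric cubes, that summand contributes equally in all three modes.
\item Tightness then forces the residual symmetric tensor on the core to have a symmetric decomposition of length equal to its (non-symmetric) rank, while also containing a forced extra cross term. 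We show this is impossible by a dimension count: the cross term would require a vector $v_i$ lying in a linear space that, by explicit computation over $\QQ$ (hence over $\CC$, as linear algebra facts are field-independent), is zero.
\end{enumerate}
We expect the tightness argument in the first item to be the crux. It must control every decomposition, not only generic ones. We would first try to prove it via a lemma that, in a tight substitution sequence, the removed rank-one terms can be chosen to be the ones supported on the eliminated direction, and then use symmetry to transport that support to all three modes.
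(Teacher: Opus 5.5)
There is a genuine gap. Your proposal never specifies a tensor, and for an existence statement like this the construction and its verification are the whole content. With ``$T=\Pi_3(x\cdot Q(y))+(\text{correction terms})$'' and unspecified blocks, none of your three steps can be carried out, and nothing produces the numbers $27$, $55$, $56$.

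You also never isolate the property that makes the adjoin framework work. For $T=T_0+3\sum_{\nu=1}^m\lambda_\nu e_\nu w_\nu^2$, the affine space $\cA=T_0+J(K)$ must contain a non-symmetric rank-one tensor and a symmetric rank-two tensor, but neither $0$ nor any cube $h^3$. The paper achieves this as follows:
\begin{itemize}
\item $V=\CC c\oplus E_1\oplus E_2\cong\CC^9$ and $m=18$.
\item $K$ is spanned by two copies of the grid matrices $w_{rs}^2$, $w_{rs}=(d_r,-2d_s)$, glued along $c$.
\item $T_0=a_1^3+a_2^3+c^3$.
\end{itemize}
Then $s\otimes c\otimes c$ (with $s=a_1+a_2+c$) and $\tfrac14(a_1+a_2)^3+c^3$ lie in $\cA$, which yields explicit rational decompositions of lengths $55$ and $56$. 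The facts $0\notin\cA$ and $h^3\notin\cA$ are proved with linear maps $R_j:V\to\CC^3$ satisfying $(R_j^{\otimes 3}P)_{0,1,2}=-2$ for all $P\in\cA$, together with a splitting lemma for $h$ outside the two glued copies of the grid span.

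So $\rk(T)\ge 55$ comes from $\rk(T)=3m+\min_{P\in\cA}\rk(P)$ plus $0\notin\cA$, not from a ``core whose rank equals its flattening rank.'' And the gap does not come from absorbing a block like $\Pi_3(x\otimes y\otimes z)$.

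Your step 1 is right once $0\notin\cA$ is known: tightness forces $N=3m+1$ and leaves a single rank-one residual in $\cA$. Step 2 is false, however. The substitution maps $\pi_1,\pi_2,\pi_3$ are different and kill pairwise disjoint index sets $I_1,I_2,I_3$. So the $3m$ removed cubes are distinct, and no $h_i^3$ handles an adjoined direction ``in all three modes.''

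The residual $P=(\pi_1\otimes\pi_2\otimes\pi_3)(T)$ is the image $\pi_1(h)\otimes\pi_2(h)\otimes\pi_3(h)$ of the one surviving summand. It is a rank-one element of $\cA$, hence necessarily \emph{non-symmetric}, since $\cA$ contains no cube. The ``residual symmetric tensor on the core'' in step 3 therefore does not exist, and this asymmetry is exactly why Conjecture 6 is hard.

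The missing idea is to project only the first two factors and keep the third factor equal to $h_i$. Write $P=u_1\otimes u_2\otimes u_3$. The $m+1$ surviving matrices $\pi_1(h_i)\otimes\pi_2(h_i)$ form a basis of $K\oplus\kk(u_1\otimes u_2)$. Matching third-factor coefficients against $(\pi_1\otimes\pi_2\otimes\id)(T)=P+\sum_\nu M_\nu\otimes(e_\nu-\pi_3(e_\nu))$ expresses $u_3$ through those $h_i$ whose matrices lie outside $K$.

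To finish, you need that $W=\spn\{u_1,u_2,u_3\}$ is a plane with $K\cap S^2W\in\{0,\kk x^2\}$, which follows from the binary-cubic lemma and the absence of cubes in $\cA$. This forces either $h_\ell\otimes h_\ell=\beta\,u_1\otimes u_2$ for some surviving index $\ell$, or $u_3\in\kk x$; both are contradictions. An unspecified ``dimension count'' does not replace this argument.
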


To our knowledge, it remains open whether the rank and symmetric rank can differ by greater than one (see~\cite[Problem 23]{Shitov2018}).

\subsection{Methods and technical contribution}\label{sec:changes}

Our construction uses the symmetric adjoin framework introduced by Shitov in~\cite{Shitov2018}. This is a general method for constructing counterexamples to Comon's conjecture, which proceeds as follows. We focus on the order 3 case, although Shitov also introduced a higher-order analogue which has been useful for producing counterexamples.

In Shitov's symmetric adjoin framework, one starts with a symmetric tensor $T_0 \in \CC^k \otimes \CC^k \otimes \CC^k$ and a linearly independent set of symmetric rank-one matrices $M_1,\dots, M_m \in \CC^k \otimes \CC^k$. One then constructs the symmetric tensor
\[
T=T_0+3\sum_{\nu=1}^m e_{\nu} M_{\nu} \in (\CC^k \oplus \CC^m)^{\otimes 3},
\]
where $e_1,\dots, e_m$ is a basis for $\CC^m$ and $e_{\nu} M_{\nu}:=\Pi_3(e_{\nu} \otimes M_{\nu})$ denotes the symmetric product, where $\Pi_3$ is the projection onto the symmetric subspace. Shitov then considers a certain affine linear space
\[
\cA:=T_0 + \sum_{\nu=1}^m (\CC^k \otimes \CC M_{\nu} + \iota_2(\CC^k \otimes \CC M_{\nu}) + \CC M_{\nu} \otimes \CC^k) \subseteq \CC^k \otimes \CC^k \otimes \CC^k,
\]
where $\iota_2$ is the permutation defined as $\iota_2(a \otimes b\otimes c):=b\otimes a \otimes c$ and extended linearly.  In Shitov's notation, $\cA=\mathcal{E}_{\spn\{M_1,\dots, M_m\}}(T_0)$.

Shitov established that the tensor rank of $T$ is equal to $3m$ plus the minimum rank of a tensor in $\cA$~\cite[Lemma 5]{Shitov2018}, and conjectured that the symmetric tensor rank of $T$ equals $3m$ plus the minimum symmetric rank of a symmetric tensor in $\cA$~\cite[Conjecture 6]{Shitov2018}. A positive resolution of this conjecture would present a promising route towards a counterexample to Comon's conjecture. For example, certifying that the minimum rank of a tensor in $\cA$ is equal to 1, while the minimum symmetric rank of a tensor in $\cA$ is equal to 2, seems much easier than directly checking if a (potentially high-rank) tensor $T$ exhibits a gap between its rank and symmetric rank. Indeed, Shitov writes that a positive answer to Conjecture 6 ``would present a much simpler counterexample to Comon's conjecture than the one constructed here.'' Instead, he relies on more complicated arguments to establish this result in specific examples. An excellent review of Shitov's approach is given in~\cite[Section 4]{WS2023}.

In this work, we prove exactly the special case of Shitov's Conjecture 6 that was given as an example in the above paragraph, namely, when the minimum symmetric rank of a symmetric tensor in $\cA$ is equal to $2$. As a byproduct, we obtain a small counterexample to Comon's conjecture.

We remark that Shitov disproved a slightly more general statement than Conjecture 6 in~\cite{ShitovRefutation2024} (in addition to his related Conjecture 7), namely when the matrices $M_1,\dots, M_m$ are not required to have symmetric rank 1. We note that~\cite[Lemmas 11 and 15]{Shitov2018} combined with the very recent preprint~\cite[Section 9]{ShitovIllusions2026} shows that the construction in~\cite{Shitov2018} gives a counterexample to the original rank-one version of Conjecture 6.

\section*{Acknowledgments}
The author thanks Alexander Taveira Blomenhofer for helpful feedback on an initial draft of this manuscript. The author acknowledges support from NSERC Discovery Grant number
RGPIN-2026-05413.

\section{Preliminaries}\label{sec:background}

In this section we introduce some mathematical preliminaries and definitions required in this work.

\subsection{Tensors and the symmetric product}

For a finite-dimensional vector space $V$ over $\kk \in \{\QQ, \RR, \CC\}$, let $S^dV\subseteq V^{\otimes d}$ be the \textit{symmetric subspace} of tensors for which $\sigma\cdot T =T$ for all permutations $\sigma \in \mathfrak{S}_d$, where $\sigma$ acts on $T$ by permuting factors, i.e. $(\sigma \cdot T)_{i_1,\dots, i_d}=T_{i_{\sigma(1)},\dots, i_{\sigma(d)}}$. We let
\[
 \sym_d:V^{\otimes d}\longrightarrow S^dV,
 \qquad \sym_d(T)=\frac1{d!}\sum_{\sigma\in\mathfrak S_d}\sigma T
\]
be the projection onto the symmetric subspace. For vectors $v_1 \in S^{\ell_1}(V),\dots, v_p \in S^{\ell_p}(V)$ we let
\begin{align*}
v_1 v_2 \cdots v_p := \Pi_{\ell_1+\dots+\ell_p}(v_1 \otimes \dots \otimes v_p) \in S^{\ell_1+\dots+\ell_p}(V).
\end{align*}
For example, for $u,v,w \in V$ we have
\begin{align*}
 uv&:=\Pi_2(u\otimes v)=\frac12(u\otimes v+v\otimes u)\\
 uvw&:=\Pi_3(u \otimes v \otimes w)\\
 &= \frac16(u\otimes v\otimes w+u\otimes w\otimes v+v\otimes u\otimes w+ v \otimes w \otimes u + w \otimes u \otimes v + w \otimes v \otimes u).
\end{align*}

\newpage
\subsection{Rank and flattenings}

\begin{definition}[Tensor rank and symmetric rank]\label{def:ranks}
For $T\in V_1 \otimes \dots \otimes V_d$, its \textit{(tensor) rank} $\rk(T)$ is the least $r$
for which
\[
 T=\sum_{i=1}^r v_{i,1}\otimes\cdots\otimes v_{i,d}
\]
for some $v_{i,j} \in V_j$. For $T\in S^dV$, its \textit{symmetric rank} $\srk(T)$ is the least $r$ for which
\[
 T=\sum_{i=1}^r\lambda_i v_i^{\otimes d}
\]
for some $\lambda_i \in \kk$ and $v_i \in V$. When the ground field is not clear from the context, we
write $\rk_{\kk}(T)$ and $\srkfield{\kk}(T)$.
\end{definition}

For $T\in V_1 \otimes V_2 \otimes V_3$, write
\begin{equation*}
 T_3:V_3^*\longrightarrow V_1\otimes V_2,
 \qquad T_3(\beta)=(\id\otimes\id\otimes\beta)T
\end{equation*}
for the third flattening, and define the other flattenings $T_1, T_2$ analogously.

Define three permutation maps $\iota_i: V \otimes V\otimes V \rightarrow V \otimes V \otimes V$ by
\begin{align*}
 \iota_1(u \otimes a\otimes b)&=u\otimes a\otimes b,\notag\\
 \iota_2(u \otimes a\otimes b)&=a\otimes u\otimes b,\label{eq:insertions}\\
 \iota_3(u \otimes a\otimes b)&=a\otimes b\otimes u,\notag
\end{align*}
extended linearly. 

For subspaces $L \subseteq S^{d_1}(V), H \subseteq S^{d_2}(V)$, we define $LH:=\spn\{u v : u \in L, v \in H\}$. Note that $LH=HL$. For a vector $v \in S^{d_1}(V)$ we use the shorthand $vH:=\spn\{v\} H$.

\subsection{Shitov's symmetric adjoin construction}

In this section we review a special case of the symmetric adjoin construction defined in~\cite[Section 1.3]{Shitov2018}.

\begin{definition}\label{def:adjoining}
Let $V$ be a finite-dimensional vector space over $\kk \in \{\QQ, \RR,\CC\}$, let $T_0\in S^3V$, and let $K\subseteq S^2V$ be an $m$-dimensional subspace with a basis of the form
\[
 M_\nu= \lambda_{\nu} w_\nu^{2},\qquad
 1\le\nu\le m.
\]
Let
\begin{equation*}
 J(K):=\sum_{j=1}^3\iota_j(V \otimes K),\qquad
 \mathcal{A}(T_0,K):=T_0+J(K).
\end{equation*}
When the choices of $T_0$ and $K$ are clear from the context, we write $\cA:=\mathcal{A}(T_0,K)$. Let $Z:=\kk^m$ and let $\widetilde V:=V \oplus Z$. We define the \textit{symmetrically adjoined tensor} $T\in S^3(\widetilde V)$ associated to $V$, $T_0$, and the matrices $M_\nu$, as
\begin{align*}
 T&=T_0+\sum_{\nu=1}^m\sum_{j=1}^3\iota_j(e_\nu \otimes M_\nu)\\
 &=T_0+3\sum_{\nu=1}^m \lambda_{\nu} e_\nu w_{\nu}^2 \in S^3(\widetilde V),
\end{align*}
where $e_1,\dots, e_m$ is the standard basis of $Z$.
\end{definition}
Note that $\mathcal{A}(T_0,K)=\mathcal{E}_K(T_0)$ in the notation of~\cite[Section 1.2]{Shitov2018}.

We will use the following elementary fact many times:
\[
J(K)\cap S^3V=\Pi_3(J(K))=VK.
\]
Here, the first equality follows from the fact that $J(K)$ is invariant under permutations of the three tensor factors, and the second is straightforward.

\section{A special case of a conjecture of Shitov}\label{sec:projection}

In this section we fix a finite-dimensional vector space $V$ over $\kk\in \{\RR, \CC\}$, along with $T_0$, $K$, the $m$ symmetric rank one matrices $M_\nu$, and the resulting symmetrically adjoined tensor $T$, as described in \Cref{def:adjoining}, along with the other notation introduced there. Over the complex numbers, Shitov conjectured the following:

\begin{conjecture}[Conjecture 6 in \cite{Shitov2018}]\label{con:shitov-conjecture}
\begin{align*}
\srkfield{\CC}(T)=3m+\min_{P \in \cA \cap S^3(V)} \srkfield{\CC}(P).
\end{align*}
\end{conjecture}

Here we prove this conjecture in the special case when $\min_{P \in \cA \cap S^3(V)} \srkfield{\CC}(P)=2$ (when this quantity is zero or one, the conjecture holds by an analogous argument to~\cite[Proposition 3.19]{WS2023}). We note that~\cite[Lemmas 11 and 15]{Shitov2018} combined with the very recent preprint~\cite[Section 9]{ShitovIllusions2026} shows that the construction in~\cite{Shitov2018} gives a counterexample to Conjecture 6 in the case when $\min_{P \in \cA \cap S^3(V)} \srkfield{\CC}(P)\geq4$.

In the following, the statements about $\rk_{\kk}(T)$ are a direct consequence of~\cite[Lemma 5]{Shitov2018} (along with its real counterpart, which is proven identically). In addition, the upper bound $\srkfield{\kk}(T)\leq 3m+2$ in the case when $\cA$ contains a symmetric rank two tensor is also known (see e.g.~\cite[Lemma 6.12]{Shitov2020} and~\cite[Proposition 3.18]{WS2023}). We include these statements in the theorem for convenience.

\begin{theorem}\label{thm:threshold}
If  $\cA$ contains neither zero nor a symmetric rank one tensor, then the symmetrically adjoined tensor $T \in S^3(\widetilde V)$ satisfies
\begin{equation*}
 \rk_{\kk}(T)\ge3m+1,
 \qquad\text{and}\qquad \srkfield{\kk}(T)\ge3m+2.
\end{equation*}
If $\cA$ contains a (non-symmetric) rank one tensor, then equality holds in the first bound, and if $\cA$ contains a symmetric rank two tensor, then equality holds in the second bound.
\end{theorem}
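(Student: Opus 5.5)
The two claims about $\rk_{\kk}(T)$ come directly from \cite[Lemma 5]{Shitov2018} and its real analogue: $\rk_{\kk}(T)=3m+\min_{P\in\cA}\rk_{\kk}(P)$. If $\cA$ does not contain zero, the minimum is at least $1$. If $\cA$ contains a rank one tensor, the minimum equals $1$. The upper bound $\srkfield{\kk}(T)\le 3m+2$ when $\cA$ contains a symmetric rank two tensor $P=\mu_1u_1^3+\mu_2u_2^3$ is the known construction \cite[Proposition 3.18]{WS2023}. For each $\nu$ one writes $3\lambda_\nu e_\nu w_\nu^2$ as a combination of three cubes of vectors in $\spn\{e_\nu,w_\nu\}$, whose $S^3V$-parts are multiples of $w_\nu^3$. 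One chooses the coefficients so that these parts cancel the $VK$-correction $P-T_0$. I will only re-verify this bookkeeping. The real work is the lower bound $\srkfield{\kk}(T)\ge 3m+2$.

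For the lower bound I argue by contradiction. Suppose $T=\sum_{i=1}^{r}\mu_i v_i^3$ with $r\le 3m+1$, and write $v_i=a_i+z_i$ with $a_i\in V$ and $z_i\in Z$. Comparing graded components in $S^3(V\oplus Z)$ gives four identities:
\[
\sum_i\mu_i z_i^3=0,\qquad \sum_i\mu_i z_i^2a_i=0,\qquad \sum_i\mu_i z_ia_i^2=\sum_\nu\lambda_\nu e_\nu w_\nu^2,\qquad \sum_i\mu_i a_i^3=T_0 .
\]
The third identity says that the flattening $Z^*\to S^2V$ of $\sum_i\mu_i z_ia_i^2$ has image exactly $K$. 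I will proceed by induction on $m$, proving the stronger statement that every symmetric decomposition of $T$ has length at least $3m+\min\{2,\ \min_{P\in\cA\cap S^3V}\srkfield{\kk}(P)\}$. The case $m=0$ is trivial.

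For the inductive step, pick $\nu$ and apply to $\widetilde V$ a linear substitution $\phi$ that fixes $V$ and $e_\mu$ for $\mu\ne\nu$, and sends $e_\nu\mapsto e_\nu+c\,w_\nu$, followed by the projection killing $e_\nu$. The image $\phi(T)$ is again a symmetrically adjoined tensor, for $K'=\spn\{M_\mu:\mu\neq\nu\}$ and a new core $T_0'\in T_0+VK$. Hence $\cA(T_0',K')\subseteq\cA(T_0,K)$, so the hypothesis on $\cA$ is inherited. Meanwhile $\phi(T)=\sum_i\mu_i\phi(v_i)^3$, so the induction hypothesis applies to this new decomposition.

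The key claim is that $c$, and if needed a further shift $e_\nu\mapsto e_\nu+c\,w_\nu+x$ with $x$ in the span of the other $e_\mu$, can be chosen so that at least three of the $\phi(v_i)$ vanish or become proportional to each other and merge. This drops the length by $3$, closing the induction. To find these three terms I look at the $e_\nu$-coordinates $\zeta_i$ of the $z_i$. The identities above give $\sum_i\mu_i\zeta_i a_i^2\equiv\lambda_\nu w_\nu^2$ modulo the other directions, together with $\sum_i\mu_i\zeta_i^2a_i\equiv0$ and $\sum_i\mu_i\zeta_i^3\equiv0$. A binary-form (Sylvester) argument on the pencil spanned by $e_\nu$ and $w_\nu$ should then show that at least three indices have $v_i$ essentially in $\spn\{e_\nu,w_\nu\}$ plus directions that the projection removes.

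I expect this step to be the main obstacle. The rank-one tensor $e_\nu w_\nu^2$ has symmetric rank $3$, which heuristically forces three terms, but nothing in the decomposition obviously localizes them. The first thing I would try is to flatten $T$ along $e_\nu^*$, use that the resulting matrix has rank exactly $1$ modulo the other data, and apply the Wang--Seigal argument of \cite[Proposition 3.19]{WS2023}. I would track the one extra term it leaves over, and in the base case $m=1$ check directly that a leftover of a single cube $\mu a^3$ would produce an element of $\cA$ of symmetric rank at most $1$, contradicting the hypothesis.
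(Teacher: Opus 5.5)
\textbf{There is a genuine gap.} It is the ``key claim'' of your inductive step, and it carries the entire lower bound. You do not prove it, and the sketch cannot work as formulated.

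\emph{The unconditional collapse is false.} Suppose some substitution removed three terms from \emph{every} decomposition, even allowing $e_\nu\mapsto y$ with arbitrary $y\in V$. Then your induction would run without the cap at $2$ and give $\srkfield{\kk}(T)\ge 3m+\min_{P\in\cA\cap S^3V}\srkfield{\kk}(P)$ in general. This is false over $\CC$ by the counterexamples to Conjecture~\ref{con:shitov-conjecture} cited in the paper. So any valid proof of the collapse must use the length bound $r\le 3m+1$ and the hypothesis on $\cA$ essentially. Neither the Sylvester argument on $\spn\{e_\nu,w_\nu\}$ nor the flattening along $e_\nu^*$ does so.

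\emph{The graded identities do not force the localization you want.} Your $\phi$ has the one-dimensional kernel $\kk(e_\nu-cw_\nu-x)$. It therefore removes terms only under special incidences: some $v_i$ on that line, or several $v_i$ spanning a plane through it. Nothing places three $v_i$ in $\spn\{e_\nu,w_\nu\}$. Take $b_\nu\in V$ arbitrary and $P:=T_0+3\sum_\nu b_\nu w_\nu^2$, and consider the decompositions
$T=P+\sum_\nu\bigl[\tfrac12(w_\nu+z_\nu)^3-\tfrac12(w_\nu-z_\nu)^3-z_\nu^3\bigr]$ with $z_\nu=\lambda_\nu e_\nu-b_\nu$. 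For $b_\nu\notin\kk w_\nu$, no term carrying $e_\nu$ lies in that plane. Those three terms vanish together only under $e_\nu\mapsto\lambda_\nu^{-1}b_\nu$, which is not of your form.

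\emph{The shift $x$ breaks the induction.} Taking $x\neq0$ turns the remaining generators into $\lambda_\mu w_\mu^2+\lambda_\nu x_\mu w_\nu^2$. These are not rank one, so your induction hypothesis no longer applies to $\phi(T)$.

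\textbf{How the paper proceeds.} It never uses a single symmetric substitution.
\begin{enumerate}
\item It applies the non-symmetric substitution of \Cref{lem:projections} to $\sum_i h_i\otimes h_i\otimes h_i$, with three \emph{different} maps $\pi_1,\pi_2,\pi_3$. Since $N\le 3m+1$, exactly one product $u_1\otimes u_2\otimes u_3\in\cA$ survives (\Cref{lem:partial-projection}).
\item Since $\Pi_3(\cA)\subseteq\cA$, the binary-cubic \Cref{lem:binary} applies. It forces $W=\spn\{u_1,u_2,u_3\}$ to be a plane with $K\cap S^2W\in\{0,\kk x^2\}$ and exactly one factor proportional to $x$ (\Cref{lem:residual-plane}). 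So the binary-form argument you anticipate does occur, but on this plane $W\subseteq V$, not on $\spn\{e_\nu,w_\nu\}$.
\item Only then is the symmetry of the decomposition used. In $(\pi_1\otimes\pi_2\otimes\id)(T)$, the matrices $\pi_1(h_i)\otimes\pi_2(h_i)$, $i\in S$, form a basis of $K\oplus\kk(u_1\otimes u_2)$. Comparing third factors gives one of two contradictions: $u_3=\beta h_\ell$, so $h_\ell\otimes h_\ell=\beta u_1\otimes u_2$, which is impossible; or $u_3\in\kk x$.
\end{enumerate}

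\textbf{A smaller slip in the upper bound.} Cubes of vectors in $\spn\{e_\nu,w_\nu\}$ have $S^3V$-parts in $\kk w_\nu^3$. They cannot cancel $P-T_0=3\sum_\nu b_\nu w_\nu^2$. Instead use $z_\nu=\lambda_\nu e_\nu-b_\nu$ in the identity $3zw^2=\tfrac12(w+z)^3-\tfrac12(w-z)^3-z^3$.
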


\begin{remark}
We remark on the more general setting when the basis elements $M_{\nu}$ of $K$ are not required to have rank one. A nearly identical proof as below shows that $\srkfield{\kk}(T)\ge3m+2$ also holds in this case, establishing the lower bound
\[
\srkfield{\kk}(T)\geq 3m+\min_{P \in \cA \cap S^3(V)} \srkfield{\kk}(P)
\]
in the case $\min_{P \in \cA \cap S^3(V)} \srkfield{\kk}(P)=2$. Interestingly, Shitov recently found a counterexample to this lower bound over $\CC$ in the case $\min_{P \in \cA \cap S^3(V)} \srkfield{\CC}(P)=3$~\cite[Example 3]{ShitovRefutation2024}. Wang and Seigal conjectured an analogous lower bound over the real numbers~\cite[Conjecture 3.17]{WS2023}, for which our theorem also establishes a special case.
\end{remark}

To prove this theorem, we begin with the following lemma, which is essentially~\cite[Lemma 5]{Shitov2018}. The proof of this lemma is a straightforward application of the substitution method~\cite[Lemma 2]{HK1971} (see also the discussion preceding~\cite[Lemma 4]{Shitov2018}). We provide a self-contained proof in~\Cref{sec:substitution} for completeness.

\begin{lemma}\label{lem:projections}
Let
\begin{align*}
T= \sum_{i=1}^N a_i\otimes b_i\otimes c_i
\end{align*}
be an expression of the symmetrically adjoined tensor $T$ as a sum of $N$ product tensors. Then there exist linear maps
\[
 \pi_j:\widetilde V\longrightarrow V,
 \qquad \pi_j|_V=\id_V,\qquad j=1,2,3,
\]
and pairwise disjoint subsets
\[
I_1, I_2, I_3 \subseteq [N], \qquad |I_j|=m
\]
such that the following statements hold:
\begin{enumerate}
\item $\pi_1 a_i=0$ for all $i \in I_1$, $\quad$ $\pi_2 b_i =0$ for all $i \in I_2$, \quad and \quad $\pi_3 c_i = 0$ for all $i \in I_3$.
\item $(\pi_1 \otimes \pi_2 \otimes \pi_3)(T) \in \cA$.
\item It holds that
\begin{equation}\label{eq:ordinary-substitution}
 \rk(T)=3m+\min\{\rk(P):P\in\cA\}.
\end{equation}
\end{enumerate}
\end{lemma}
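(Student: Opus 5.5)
The proof is the substitution method applied three times, once per tensor factor, and it eliminates the $m$ directions $e_1,\dots,e_m$ of $Z$ in each factor. We write $\widetilde V=V\oplus Z$ and view $T$ through its flattenings. The key structural fact is that, modulo terms with no $Z$ component in the third factor, $T$ equals $\sum_\nu M_\nu\otimes e_\nu$. Hence the third flattening $T_3$, restricted to the functionals $e_\nu^*$ (extended by zero on $V$), sends $e_\nu^*$ to $M_\nu$ (the symmetric terms $e_\nu w_\nu^2$ contribute exactly $\lambda_\nu w_\nu\otimes w_\nu=M_\nu$ when one contracts the third slot against $e_\nu^*$). Because the $M_\nu$ are linearly independent, this restricted image is $m$-dimensional.

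\textbf{Step 1 (third factor).} Given $T=\sum_{i=1}^N a_i\otimes b_i\otimes c_i$, the vectors $c_i$ must span a space whose projection onto $Z$ is all of $Z$, since the $M_\nu$ are independent. Choose $I_3\subseteq[N]$ with $|I_3|=m$ such that the $Z$-components of $c_i$, $i\in I_3$, form a basis of $Z$. Define $\pi_3:\widetilde V\to V$ to be the identity on $V$ and, on $Z$, the unique linear map with $\pi_3 c_i=0$ for $i\in I_3$; explicitly, $\pi_3(z)=-(\text{$V$-part of }c_i)$ when $z$ is the $Z$-part of $c_i$. Then $(\id\otimes\id\otimes\pi_3)T=\sum_{i\notin I_3}a_i\otimes b_i\otimes\pi_3c_i$ has $N-m$ terms. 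We also compute its form directly: $\pi_3$ replaces each $e_\nu$ in the third slot by some $u_\nu\in V$. The result is $T_0$, plus terms $\iota_1,\iota_2$-type still involving $e_\nu$ in slots one and two, plus $\sum_\nu M_\nu\otimes u_\nu\in\iota_3(V\otimes K)$.

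\textbf{Steps 2--3.} Repeat the argument on the second factor, using the partially projected tensor. Its second flattening still contains $e_\nu^*\mapsto$ (the slot-1,3 matrix $M_\nu$ with the third factor $\pi_3$ applied, which is still $M_\nu$ since $w_\nu\in V$). The images therefore remain independent, and the remaining terms yield an index set $I_2$ of size $m$ disjoint from $I_3$, because only terms outside $I_3$ survive. Then repeat on the first factor to get $I_1$. After all three substitutions the tensor lies in $T_0+\sum_j\iota_j(V\otimes K)=\cA$, which gives (i) and (ii). The product $(\pi_1\otimes\pi_2\otimes\pi_3)T$ then has rank at most $N-3m$, so $\min_{P\in\cA}\rk(P)\le \rk(T)-3m$.

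\textbf{Step 4 (converse for (iii)).} Take $P=T_0+\sum_\nu(x_\nu\otimes M_\nu+\iota_2(y_\nu\otimes M_\nu)+M_\nu\otimes z_\nu)\in\cA$. Then
\[T-P=\sum_\nu\bigl((e_\nu-x_\nu)\otimes M_\nu+\iota_2((e_\nu-y_\nu)\otimes M_\nu)+M_\nu\otimes(e_\nu-z_\nu)\bigr),\]
and this has rank at most $3m$ since each $M_\nu$ has rank one. Hence $\rk(T)\le 3m+\rk(P)$.

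\textbf{Main obstacle.} The delicate point is the bookkeeping in Steps 2--3. One must check that after earlier projections the relevant flattening restricted to $Z^*$ still has rank $m$, so that the basis-selection argument applies only to indices outside the previously chosen sets. One must also check that the cross terms produced (for example, $e_\nu$ in slot 1 multiplied by $u_\mu$ in slot 3) all land in $J(K)$. This works because every term of $T$ involving $Z$ is of the form $\iota_j(e_\nu\otimes M_\nu)$, and substituting vectors of $V$ for $e_\nu$ keeps it in $\iota_j(V\otimes K)$.
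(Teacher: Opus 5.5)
Your proposal is correct and follows the paper's proof: at each step, contracting one factor with functionals on $Z$ yields $\sum_\nu \varphi(e_\nu)M_\nu$, so by linear independence of the $M_\nu$ the surviving $Z$-components span $Z$. You then pick $m$ indices whose components form a basis, define $\pi_j$ to kill those terms, and get the reverse inequality from the explicit $3m$-term decomposition of $T-P$. The only differences are immaterial: you eliminate the factors in the order $3,2,1$ instead of $1,2,3$, and your worry about cross terms is vacuous, since each $\iota_j(e_\nu\otimes M_\nu)$ has $e_\nu$ in only one slot.
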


We note that in the case when the $M_{\nu}$ do not have rank one,~\Cref{eq:ordinary-substitution} becomes just an inequality $\geq$. The rest of the statement is identical.

With this lemma in hand, we now give a brief overview of the proof of~\Cref{thm:threshold}. As mentioned above, the main task is to rule out a decomposition $T=\sum_{i=1}^N h_i^3$ with $N\leq 3m+1$.
Assuming toward contradiction that such a decomposition exists, the substitution method leaves a single nonzero rank one tensor $P=u_1\otimes u_2\otimes u_3\in\cA$. Since $\cA$ does not contain a symmetric rank one tensor, we can assume that $u_1$ and $u_2$ are linearly independent. We show that $W:=\spn\{u_1,u_2,u_3\}$ is two-dimensional and that
$K\cap S^2W$ is either $\{0\}$ or $\kk x^2$ for some $0\ne x\in W$, with exactly one factor of $P$ proportional to $x$ in the latter case~(\Cref{lem:residual-plane}). From here, it follows that the nonzero rank one matrices $\pi_1(h_i) \otimes \pi_2(h_i)$ appearing after the first two projections in the substitution method form a basis of
$K\oplus\kk(u_1\otimes u_2)$~(\Cref{lem:partial-projection}). Comparing the third-factor coefficients in this basis gives a contradiction in each case: The first forces a square $h_i^2$ to be proportional to the non-symmetric matrix $u_1 \otimes u_2$, while the second forces a second factor of $P$ to be proportional to $x$.

\begin{lemma}\label{lem:partial-projection}
Suppose that $0\notin\cA$, and there exists a symmetric decomposition
\[
    T=\sum_{i=1}^N h_i^{3}
\]
with $h_i$ non-zero and $N \leq 3m+1$. The following statements hold:

\begin{enumerate}

\item  $N=3m+1$, and there exist nonzero $u_1,u_2,u_3 \in V$ such that
\[
    P:=(\pi_1\otimes\pi_2\otimes\pi_3)(T)=u_1 \otimes u_2 \otimes u_3 \in\cA,
\]
where the maps $\pi_i$ are as in~\Cref{lem:projections}.

\item Let
\[
    S:=\{i:\pi_1(h_i)\ne0\text{ and }\pi_2(h_i)\ne0\}.
\]
If $u_1 \otimes u_2 \notin K$, then $|S|=m+1$ and the set
\begin{align}\label{eq:rank-one-basis}
    \{\pi_1(h_i)\otimes\pi_2(h_i):i\in S\}
\end{align}
forms a basis for $K\oplus \kk (u_1 \otimes u_2)$.
\end{enumerate}
\end{lemma}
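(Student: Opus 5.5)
Apply \Cref{lem:projections} to the decomposition $T=\sum_{i=1}^N h_i\otimes h_i\otimes h_i$, i.e.\ with $a_i=b_i=c_i=h_i$. This gives maps $\pi_1,\pi_2,\pi_3$ and disjoint index sets $I_1,I_2,I_3$ of size $m$ such that $\pi_j(h_i)=0$ for $i\in I_j$. Consequently
\[
P=(\pi_1\otimes\pi_2\otimes\pi_3)(T)=\sum_{i\notin I_1\cup I_2\cup I_3}\pi_1(h_i)\otimes\pi_2(h_i)\otimes\pi_3(h_i)\in\cA .
\]
At most $N-3m\le 1$ indices remain. Since $0\notin\cA$, $P\neq0$, so exactly one index remains. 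This forces $N=3m+1$, and $P=u_1\otimes u_2\otimes u_3$ with $u_j=\pi_j(h_{i_0})$ all nonzero. That proves part (i).

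For part (ii), I would apply only $\pi_1\otimes\pi_2\otimes\id$ to $T$. Since $\pi_j|_V=\id$ and $T=T_0+\sum_\nu\sum_j\iota_j(e_\nu\otimes M_\nu)$, the image has the form
\[
Q:=(\pi_1\otimes\pi_2\otimes\id)(T)=T_0'+\sum_\nu M_\nu\otimes e_\nu
\]
with $T_0'\in\cA\subseteq V^{\otimes3}$: the terms with $e_\nu$ in the third slot survive as $M_\nu\otimes e_\nu$, and the other two slots contribute elements of $J(K)$.

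Now take the third flattening and restrict to functionals vanishing on $V$, i.e.\ the coordinate functionals $e_\nu^*$ on $Z$. This gives $Q_3(e_\nu^*)=M_\nu$ for each $\nu$. On the other hand, from the decomposition,
\[
Q_3(\beta)=\sum_{i\in S}\beta(h_i)\,\pi_1(h_i)\otimes\pi_2(h_i).
\]
Only indices in $S$ contribute, and $S$ is disjoint from $I_1\cup I_2$, so $|S|\le N-2m=m+1$. Hence the span of $\{\pi_1(h_i)\otimes\pi_2(h_i)\}_{i\in S}$ contains $K$, which has dimension $m$.

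Next I need $u_1\otimes u_2$ in this span. Apply $\pi_3$ on the third factor instead of restricting. Writing $P=(\id\otimes\id\otimes\pi_3)Q$ and flattening, for a functional $\gamma$ on $V$ we get
\[
P_3(\gamma)=\sum_{i\in S}\gamma(\pi_3 h_i)\,\pi_1(h_i)\otimes\pi_2(h_i)=\gamma(u_3)\,u_1\otimes u_2 .
\]
Choosing $\gamma$ with $\gamma(u_3)=1$ shows $u_1\otimes u_2$ lies in the span. So the span contains $K+\kk(u_1\otimes u_2)$, which has dimension $m+1$ because $u_1\otimes u_2\notin K$. Since there are at most $m+1$ vectors, $|S|=m+1$ and they form a basis. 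Finally, the span is exactly $K\oplus\kk(u_1\otimes u_2)$ by dimension count.

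The main obstacle is the bookkeeping in computing $Q$: one must check that the functionals $e_\nu^*$ kill all contributions of $T_0$ and of the $\iota_1,\iota_2$ terms after applying $\pi_1\otimes\pi_2$, and that $i_0\in S$. The latter holds since $u_1,u_2\ne0$, so it is consistent with the count.
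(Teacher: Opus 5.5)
Your proof is correct and follows essentially the paper's argument. Part (i) is identical to the paper's. In part (ii) you use the same partial projection $(\pi_1\otimes\pi_2\otimes\id)(T)$, its third flattening, and the count $|S|\le N-2m=m+1$. The only difference is that you obtain the containment $K\oplus\kk(u_1\otimes u_2)\subseteq\spn\{\pi_1(h_i)\otimes\pi_2(h_i):i\in S\}$ by evaluating the flattening at the functionals $e_\nu^*$ and at $\gamma\circ\pi_3$. The paper instead computes the image of the flattening exactly, via $G=P+\sum_\nu M_\nu\otimes(e_\nu-\pi_3(e_\nu))$ and a linear-independence check; since only the containment is needed, your shortcut suffices.
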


\begin{proof}
By the construction in \Cref{lem:projections}, there exist pairwise
disjoint sets
\[
    I_1,I_2,I_3\subseteq\{1,\ldots,N\},
    \qquad |I_j|=m,
\]
such that $\pi_j(h_i)=0$ for all $i \in I_j$.
Therefore
\[
    P=
    \sum_{i\notin I_1\cup I_2\cup I_3}
    \pi_1(h_i)\otimes\pi_2(h_i)\otimes\pi_3(h_i).
\]
There are at most $N-3m\le1$ terms in this sum.
Since $P\in\cA$ and $0\notin\cA$, the sum has exactly 1 term, so $N=3m+1$ and $P$ is a non-zero product tensor which we denote by $u_1 \otimes u_2 \otimes u_3$.

Let
\[
    G:=(\pi_1\otimes\pi_2\otimes\mathrm{id}_{\widetilde V})(T)= \sum_{i=1}^N
       \pi_1(h_i)\otimes\pi_2(h_i)\otimes h_i.
\]
A term is nonzero precisely when $i\in S$. Moreover, $S$ is disjoint from $I_1\cup I_2$,
so
\[
    |S|\le N-2m=m+1.
\]
Note that
\[
\begin{aligned}
    G=T_0+\sum_{\nu=1}^m\Bigl(
      &\iota_1(\pi_1(e_\nu) \otimes M_\nu)
       +\iota_2(\pi_2(e_\nu) \otimes M_\nu)
       +M_\nu\otimes e_\nu
    \Bigr),\\
    P=T_0+\sum_{\nu=1}^m\Bigl(
      &\iota_1(\pi_1(e_\nu) \otimes M_\nu)
       +\iota_2(\pi_2(e_\nu) \otimes M_\nu)
       +M_\nu\otimes\pi_3(e_\nu)
    \Bigr),
\end{aligned}
\]
and hence,
\begin{equation}\label{eq:partial-projection}
    G=P+
      \sum_{\nu=1}^m M_\nu\otimes(e_\nu-\pi_3(e_\nu)).
\end{equation}

Now assume $u_1 \otimes u_2 \notin K$. Then
\[
    \operatorname{im}G_3
    =\operatorname{span}\{u_1 \otimes u_2 ,M_1,\ldots,M_m\}
    =K\oplus\kk( u_1 \otimes u_2).
\]
To see this, note that the vectors
\begin{align*}
    u_3 ,\quad e_1-\pi_3(e_1),\ldots,e_m-\pi_3(e_m)
\end{align*}
are linearly independent. Indeed, a relation
\[
    \alpha u_3+\sum_{\nu=1}^m
       \beta_\nu(e_\nu-\pi_3(e_\nu))=0
\]
projects to
\[
    \sum_{\nu=1}^m\beta_\nu e_\nu=0
\]
in $Z$. Since $e_1,\ldots,e_m$ forms a basis of $Z$, all $\beta_\nu$
vanish, and hence $\alpha$ also vanishes. It follows from this and the decomposition in~\Cref{eq:partial-projection} that $\operatorname{im} G_3 = K\oplus\kk( u_1 \otimes u_2)$. The dimension statement is straightforward.

Finally, since the span of the set $\{\pi_1(h_i)\otimes\pi_2(h_i):i\in S\}$ contains $\operatorname{im}G_3$, and $|S|\leq m+1= \dim \operatorname{im}G_3$, it follows that $|S|=m+1$ and this is a basis for $\operatorname{im}G_3=K\oplus\kk( u_1 \otimes u_2)$. This completes the proof.
\end{proof}

\begin{lemma}\label{lem:binary}
Let \(W=\mathbb{F}^2\), where
\(\mathbb{F}\in\{\mathbb{R},\CC\}\).
The following statements hold:
\begin{enumerate}
\item If \(Q\in S^2W\) has rank two, then for every
\(F\in S^3W\) there exists \(h\in W\) such that
\[
F-h^3\in WQ.
\]

\item Let \(x,y\) be a basis of \(W\). If
\[
F=\alpha y^3+\beta xy^2+\gamma x^2y+\delta x^3
\]
with \(\alpha\ne0\), then there exists \(h\in W\) such that
\[
F-h^3\in Wx^2.
\]
\end{enumerate}
\end{lemma}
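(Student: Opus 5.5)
We reduce both parts to the quotient spaces $S^3W/WQ$ and $S^3W/Wx^2$, which are small enough to handle by hand. In each case it then suffices to show that the image of the cube map $h\mapsto h^3$ in the quotient contains the class of $F$.

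\textbf{Part (i).} Since $\dim S^3W=4$ and $\dim WQ=2$ (the map $W\to S^3W$, $w\mapsto wQ$, is injective because $Q\ne0$), the quotient $S^3W/WQ$ is two-dimensional. Since $Q$ has rank two, over $\CC$ we may choose a basis $x,y$ of $W$ with $Q=xy$. Over $\RR$ we get either $Q=\pm(x^2+y^2)$ or $Q=xy$ after a change of basis. Modulo $WQ$ we will identify the classes of $x^3$ and $y^3$ as a basis of the quotient.

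In the case $Q=xy$, the space $WQ=\spn\{x^2y,xy^2\}$, so $(ax+by)^3\equiv a^3x^3+b^3y^3$. Writing $F\equiv \delta x^3+\alpha y^3$, we pick real (or complex) cube roots $a=\delta^{1/3}$ and $b=\alpha^{1/3}$. Real cube roots always exist, so this works over both fields.

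In the definite real case $Q=x^2+y^2$, we have $x^2y\equiv -y^3$ and $xy^2\equiv -x^3$. Hence
\[
(ax+by)^3\equiv (a^3-3ab^2)x^3+(b^3-3a^2b)y^3 .
\]
This is $\operatorname{Re}$ and $-\operatorname{Im}$ (up to sign conventions) of $(a+ib)^3$. Surjectivity therefore follows from the fact that every complex number has a cube root. We would verify the exact sign pattern when writing the proof, but the target is $\RR^2\cong\CC$ with the map $z\mapsto z^3$ or $\bar z^3$, which is surjective either way.

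\textbf{Part (ii).} The space $Wx^2=\spn\{x^3,x^2y\}$, so modulo it $F\equiv \alpha y^3+\beta xy^2$. Take $h=ax+by$. Then
\[
h^3\equiv b^3y^3+3ab^2xy^2 .
\]
Choose $b$ a cube root of $\alpha$, which is nonzero since $\alpha\ne0$ and exists over $\RR$ and $\CC$. Then set $a=\beta/(3b^2)$. This uses characteristic $0$ and $b\ne0$.

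\textbf{Main obstacle.} The only real subtlety is the definite real case of (i), where $Q$ does not split over $\RR$. The plan is to handle it via the identification with complex cube roots. The normal-form reduction for $Q$ over $\RR$ (Sylvester's law of inertia: signature $(1,1)$ gives $xy$; definite gives $\pm(x^2+y^2)$) is standard. The sign of $Q$ does not affect $WQ$.
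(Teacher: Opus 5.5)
Your proposal is correct and takes essentially the same route as the paper. Part (ii) and the split case $Q=xy$ of part (i) are the paper's computation verbatim. For the definite real case, you use the normal form $Q=\pm(x^2+y^2)$ and surjectivity of $z\mapsto\bar z^{3}$; your sign pattern $(a^3-3ab^2,\,b^3-3a^2b)=(\operatorname{Re}z^3,-\operatorname{Im}z^3)$ with $z=a+ib$ is in fact exactly right. This is the paper's conjugate-basis argument ($Q=u\bar u$ over $\CC$, $h=au+\bar a\bar u$ with $a^3=\alpha$) written in real coordinates.
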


\begin{proof}
For (i), first suppose that we can write \(Q=uv\) for some $u,v \in W$. This is always the case when \(\mathbb{F}=\CC\). Since $Q$ has rank two, the vectors \(u,v\) are linearly independent and hence form a basis of \(W\). Thus we can write
\[
WQ=\operatorname{span}\{u^2v,uv^2\}.
\]
Write an arbitrary cubic as
\[
F=\alpha u^3+\beta u^2v+\gamma uv^2+\delta v^3.
\]
Let \(a,b\in\mathbb{F}\) be cube roots of \(\alpha,\delta\), respectively.
Then
\[
(au+bv)^3
=
\alpha u^3+3a^2b\,u^2v+3ab^2\,uv^2+\delta v^3.
\]
Consequently,
\[
F-(au+bv)^3
=
(\beta-3a^2b)u^2v+(\gamma-3ab^2)uv^2
\in WQ.
\]

Now suppose that \(\mathbb{F}=\mathbb{R}\) and we cannot write \(Q=uv\) for $u,v \in W$. Replacing \(Q\) by \(-Q\) if necessary,
which does not change \(WQ\), we can factor \(Q=uv\) in the
complexification of \(W\), with \(v=\overline{u}\).
Write \(F\) in this basis as above. Since \(F\) is real,
\[
\delta=\overline{\alpha},\qquad \gamma=\overline{\beta}.
\]
Choose \(a\in\CC\) with \(a^3=\alpha\). Then \(h:=au+\overline{a} v\) belongs to \(W\), and
the preceding calculation gives
\[
F-h^3
=
\bigl((\beta-3a^2 \overline{a})u+(\gamma-3a \overline{a}^2)v\bigr)Q
\in WQ,
\]
because the vector in parentheses is fixed by complex conjugation.
This proves (i).

For (ii), note that
\[
Wx^2=\operatorname{span}\{x^3,x^2y\}.
\]
Choose \(b\in\mathbb{F}\) with \(b^3=\alpha\), and let
\(a=\frac{\beta}{3b^2}\).
Expanding gives
\[
(by+ax)^3
=
b^3y^3+3b^2a\,xy^2+3ba^2x^2y+a^3x^3
=
\alpha y^3+\beta xy^2+3ba^2x^2y+a^3x^3,
\]
and hence,
\[
F-(by+ax)^3
=
(\gamma-3ba^2)x^2y+(\delta-a^3)x^3
\in Wx^2.
\]
This proves (ii).
\end{proof}

%
%
%
%

\begin{lemma}\label{lem:residual-plane}
Suppose that $\cA$ contains neither zero nor a symmetric rank one tensor, but it does contain a (non-symmetric) rank one tensor
$P:= u_1\otimes u_2\otimes u_3\in\cA$. Let
\[
 W:=\spn\{u_1, u_2, u_3\},
 \qquad  K_W:= K\cap\Sym^2 W.
\]
Then $\dim W=2$ and
\begin{equation*}
  K_W=\{0\}
 \qquad \qquad \text{or}\qquad \qquad
  K_W=\kk x^{2}\quad
 \text{for some }\quad 0\ne x\in W.
\end{equation*}
In the second case, exactly one factor of $P$ is proportional to $ x$.
\end{lemma}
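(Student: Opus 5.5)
The plan is to reduce everything to the symmetric part of $\cA$ and then invoke \Cref{lem:binary} on the plane $W$. First note that $\cA\cap S^3V=T_0+VK$: since $T_0$ is symmetric, a symmetric element $T_0+J$ of $\cA$ has $J\in J(K)\cap S^3V=VK$. Moreover $\Pi_3(\cA)\subseteq\cA$, because $\Pi_3(T_0)=T_0$ and $\Pi_3(J(K))=VK\subseteq J(K)$. Hence the symmetric cubic $F:=\Pi_3(P)=u_1u_2u_3$ lies in $T_0+VK$. The basic mechanism is then:

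\emph{If we find $h\in V$ with $F-h^3\in VK$, then $h^3\in\cA$.} This contradicts the hypothesis that $\cA$ contains neither zero nor a symmetric rank one tensor.

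\textbf{Dimension of $W$.} If $\dim W=1$, all $u_i$ are proportional and $P$ is itself a symmetric rank one tensor in $\cA$, which is excluded. To rule out $\dim W=3$, I would exploit the non-symmetric information. Since $T_0$ is symmetric, $P-\sigma P\in J(K)$ for every transposition $\sigma$; for example $(u_1\otimes u_2-u_2\otimes u_1)\otimes u_3\in J(K)$.

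When $u_1,u_2,u_3$ are independent, I would contract with suitable dual functionals (e.g.\ $u_1^*\otimes u_2^*\otimes\cdot$ against the antisymmetrized versions in all three slot pairs). The aim is to show that the $\iota_j(V\otimes K)$ components cannot produce an antisymmetric tensor of this form without forcing a dependence among the $u_i$. I expect this step to be the main obstacle. The first thing I would try is to combine the three relations $P-\iota P\in J(K)$ and use that each $M_\nu=\lambda_\nu w_\nu^2$ is a symmetric square: a symmetric-in-two-slots summand contributes nothing after antisymmetrizing those slots. The goal is to conclude that $u_1\wedge u_2\wedge u_3$-type components vanish.

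\textbf{Structure of $K_W$.} Now $\dim W=2$ and $F\in S^3W$.

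\begin{itemize}
\item If $K_W$ contained a rank two quadric $Q$, then \Cref{lem:binary}(i) would give $h\in W$ with $F-h^3\in WQ\subseteq VK$, which is impossible by the mechanism above. So every element of $K_W$ has rank at most one.
\item A subspace of $S^2W$ consisting of forms of rank $\le1$ has dimension at most one. If it contained $x^2$ and $y^2$ with $x,y$ independent, then $x^2+y^2$ would have rank two. If $x,y$ are dependent, the two squares span a line. Hence $K_W=\{0\}$ or $K_W=\kk x^2$.
\end{itemize}

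\textbf{The factors of $P$ in the second case.} Suppose $K_W=\kk x^2$, and complete $x$ to a basis $x,y$ of $W$.

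\begin{itemize}
\item The $y^3$-coefficient of $F=u_1u_2u_3$ is the product of the $y$-coordinates of the $u_i$. If it were nonzero, \Cref{lem:binary}(ii) would give $h$ with $F-h^3\in Wx^2\subseteq VK$, a contradiction. So at least one $u_i$ is proportional to $x$.
\item Not all three can be proportional to $x$, since $\dim W=2$.
\item If exactly two were, say those in slots $j,k$, then up to scaling $P$ is $x\otimes x$ in slots $j,k$ tensored with $u_\ell$ in the remaining slot. This tensor is $\iota_\ell(u_\ell\otimes x^2)\in J(K)$, since $x^2\in K$. Then $0=P-P\in\cA$, contradicting $0\notin\cA$.
\end{itemize}

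Hence exactly one factor of $P$ is proportional to $x$.
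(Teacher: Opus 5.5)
Your treatment of $K_W$ and of the factors of $P$ is correct and is essentially the paper's argument. You use $\Pi_3(P)\in T_0+VK$, then \Cref{lem:binary}(i) to exclude a rank-two element of $K_W$, then \Cref{lem:binary}(ii) to exclude a nonzero $y^3$-coefficient. Finally, two factors proportional to $x$ would put $P$ in $J(K)$.

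The gap is the step $\dim W\neq 3$. You leave it as a plan and flag it as the main obstacle, and everything downstream needs it, since \Cref{lem:binary} requires $\dim W=2$. As sketched, the plan does not close. Antisymmetrizing a single pair of slots (of $P-\sigma P$ or anything else) kills only the family $\iota_j(V\otimes K)$ that is symmetric in that pair. The other two families $\iota_{j'}(V\otimes K)$ survive, so no single pairwise relation forces a dependence among the $u_i$.

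The missing idea is to antisymmetrize over all of $\mathfrak S_3$ at once, i.e.\ apply the projection $V^{\otimes 3}\to\Lambda^3V$. This map annihilates any tensor fixed by some transposition $\tau$, because its image is multiplied by $\operatorname{sgn}(\tau)=-1$. Now $T_0$ is fixed by every transposition. Each element of $\iota_j(V\otimes K)$ is fixed by the transposition swapping the two slots that carry the symmetric matrix. Hence all of $\cA=T_0+J(K)$ maps to $0$. Since $P\in\cA$ maps to $u_1\wedge u_2\wedge u_3$, the $u_i$ are linearly dependent.

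Equivalently, suppose the $u_i$ were independent and take a dual system $u_i^*$. Pairing with $\sum_{\sigma}\operatorname{sgn}(\sigma)\,u_{\sigma(1)}^*\otimes u_{\sigma(2)}^*\otimes u_{\sigma(3)}^*$ gives $1$ on $P$ and $0$ on $T_0+J(K)$, a contradiction.

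This is exactly the paper's one-line argument. With it in place, your proof is complete and matches the paper's.
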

\begin{proof}
Since $T_0 \in S^3 V$ is symmetric, the map $V^{\otimes3}\to\Lambda^3V$ annihilates $T_0$, and similarly it annihilates each space $\iota_i(V \otimes K)$, since $K$ is spanned by symmetric matrices. Thus
$ u_1\wedge u_2\wedge u_3=0$, so the vectors $u_1,u_2,u_3$ are linearly dependent. They are not all proportional, since that
would make $P$ a symmetric rank-one tensor. Hence $W$ is two-dimensional.

Since $T_0$ is symmetric and $\sym_3(J(K)) \subseteq J(K)$, it follows that $\sym_3(\cA) \subseteq \cA$. Hence $\sym_3(P)\in\cA$. Note that $\sym_3(P) \in S^3 W.$ If $K_W$ contained a rank-two matrix $Q$, then \Cref{lem:binary}(i) would give some $h\in W$ such that
\[
    \sym_3(P)-h^3\in WQ\subseteq VK\subseteq J(K).
\]
Since $\sym_3(P)\in\cA=T_0+J(K)$, it follows that $h^3\in\cA$. Thus either
$h=0$, giving $0\in\cA$, or $h\neq0$, giving a nonzero symmetric rank one
tensor in $\cA$, both of which are forbidden. So $K_W$ does not contain a rank-two matrix.

The only way for $K_W$ to not contain a rank-two matrix is $K_W=\{0\}$ or $K_W=\kk x^{2}$ for some nonzero vector $x\in W$. It remains to prove that exactly one factor of $P$ is proportional to $x$ in the latter case. If two factors of $P$ were
proportional to $ x$, then $P\in J(K)$. For example, if $P=x \otimes x \otimes u$, then $P \in \iota_3(V \otimes x\otimes x) \subseteq \iota_3(V \otimes K)$. Since $P$ also lies in
$T_0+ J(K)$, this would imply $0\in\cA$, a contradiction. If no factors of $P$ are proportional
to $ x$, complete $ x$ to a basis $ x, y$ of $W$.
Then $\Pi_3(P)$ has a
nonzero $ y^{3}$ coefficient. By \Cref{lem:binary}(ii), $\Pi_3(P)$ is equal to a cube modulo $W\, x^{2}\subseteq V\, K \subseteq J(K)$. Similarly to before, this implies that $\cA$ either contains $0$ or a symmetric rank-one
tensor, both of which are forbidden. Hence, exactly one factor of $P$ is proportional to $ x$. This completes the proof.
\end{proof}

\begin{proof}[Proof of~\Cref{thm:threshold}]
The statements about $\rk_{\kk}(T)$ follow from~\cite[Lemma 5]{Shitov2018}, or alternatively \Cref{lem:projections}. It remains to prove the statements about $\srkfield{\kk}(T)$.

First, suppose that $\cA$ contains a symmetric rank two tensor $S$. Since $S-T_0 \in J(K) \cap S^3 V = VK$, there exist vectors $b_1,\dots, b_m \in V$ such that
\[
 S-T_0=3 \sum_{\nu=1}^m  b_\nu\, w_\nu^2.
\]
Recall that we also have
\[
T=T_0+3 \sum_{\nu=1}^m \lambda_{\nu} e_{\nu} w_{\nu}^2
\]
for some scalars $\lambda_{\nu} \in \kk$.  Consequently,
\[
 T=S+3\sum_{\nu=1}^m
          ( \lambda_{\nu} e_\nu- b_\nu)\, w_\nu^{2}.
\]
Using the identity
\[
3zw^2
=
\frac12(w+z)^3-\frac12(w-z)^3-z^3
\]
we obtain $\srkfield{\kk}(T) \leq 3m+2$.

It remains to prove that $\srkfield{\kk}(T)\ge3m+2$ under the sole assumption that $\cA$ contains neither zero nor a symmetric rank one tensor. We note that the below proof holds even if the $M_{\nu}$ do not have rank one. Suppose toward contradiction that
\begin{equation*}
 T=\sum_{\ell=1}^N h_\ell^{3},
 \qquad N\le3m+1,
\end{equation*}
with every $h_{\ell}\neq 0$. By~\Cref{lem:partial-projection}, there exists a rank-one tensor $P=u_1 \otimes u_2 \otimes u_3 \in \cA$. Let
\[
 W:=\spn\{u_1, u_2, u_3\},
 \qquad  K_W:= K\cap\Sym^2 W
\]
as in~\Cref{lem:residual-plane}. Then
\begin{equation*}
  K_W=\{0\}
 \qquad \qquad \text{or}\qquad \qquad
  K_W=\kk x^{2}\quad
 \text{for some }\quad 0\ne x\in W.
\end{equation*}

\emph{Case 1: $K_W=\{0\}$.} Since $\cA$ does not contain any symmetric rank one tensor, there must exist two vectors among $u_1,u_2,u_3$ that are linearly independent. Permuting factors if necessary, we may assume that these are $u_1$ and $u_2$.  Since $K \subseteq S^2V$ is symmetric, we have $u_1 \otimes u_2 \notin K$. Let
\[
  a\otimes b=\beta u_1\otimes u_2+D,
 \qquad \beta\ne0,\quad D\in K
\]
be a rank-one matrix in $K \oplus \kk (u_1\otimes u_2)$ outside $K$. Since $D$ is symmetric, antisymmetrization gives
$ a\wedge b=\beta u_1\wedge u_2\ne0$. Hence, $ a, b\in \spn\{u_1,u_2\}$ and
$D\in K\cap\Sym^2 W=0$, so $a \otimes b$ is a scalar multiple of $u_1 \otimes u_2$. It follows that all but exactly one element of the rank-one basis in~\Cref{eq:rank-one-basis} from \Cref{lem:partial-projection} lies in $K$, and the remaining element is a scalar multiple of $u_1 \otimes u_2$. Choose $\ell \in S$ so that $\pi_1(h_\ell)\otimes\pi_2(h_\ell) = \beta u_1 \otimes u_2$ for some $\beta \neq 0$. Then
\[
    G:=(\pi_1 \otimes \pi_2 \otimes \id_{\widetilde{V}}) (T) =\beta(u_1\otimes u_2)\otimes h_\ell
      +\sum_{i\in S\setminus\{\ell\}}
       \pi_1(h_i)\otimes\pi_2(h_i)\otimes h_i,
\]
where the matrices $\pi_1(h_i)\otimes\pi_2(h_i) \in K$ for all $i \neq \ell$. Comparing this with \Cref{eq:partial-projection} and recalling that $M_1,\dots, M_m$ also forms a basis for $K$, we obtain
\begin{equation*}
    u_3=\beta h_\ell.
\end{equation*}
Since $u_3\in V$, we have $h_\ell\in V$.
Consequently,
\[
    h_\ell\otimes h_\ell
    =\pi_1(h_\ell)\otimes\pi_2(h_\ell)
    =\beta u_1\otimes u_2.
\]
This is impossible: the left-hand side is symmetric, whereas the
right-hand side is not.

\emph{Case 2: $K_W=\kk x^{2}$.}
By \Cref{lem:residual-plane}, exactly one factor of $P$ is proportional
to $x$. Permuting factors if necessary, we may assume
\[
    P=x\otimes u_2\otimes u_3,
    \qquad u_2,u_3\notin\kk x.
\]
In particular, $x,u_2$ are linearly independent and span $W$. Let
\[
    a\otimes b=\beta x\otimes u_2+D,
    \qquad \beta\ne0,\quad D\in K
\]
be a rank-one matrix in $K \oplus \kk (x \otimes u_2)$ outside $K$. Since $D$ is symmetric, antisymmetrization gives
\[
    a\wedge b=\beta x\wedge u_2\ne0.
\]
Hence $a,b\in W$, and
\[
    D\in K\cap\Sym^2W=\kk x^{2}.
\]
Consequently, for some $\delta\in\kk$,
\[
    a\otimes b
    =\beta x\otimes u_2+\delta x^{2}
    =x\otimes(\beta u_2+\delta x).
\]

Let
\[
    B:=\{i\in S:
       \pi_1(h_i)\otimes\pi_2(h_i)\notin K\}
\]
index the elements of the rank-one basis in~\Cref{eq:rank-one-basis} that lie outside $K$.
For each $i\in B$, the preceding argument gives scalars
$\beta_i\ne0$ and $\delta_i$ such that
\begin{equation}\label{eq:outside-double-line}
    \pi_1(h_i)\otimes\pi_2(h_i)
    =\beta_i x\otimes u_2+\delta_i x^{2}
    =x\otimes(\beta_i u_2+\delta_i x).
\end{equation}
Thus the surviving decomposition can be written as
\[
\begin{aligned}
    G={}&(x\otimes u_2)\otimes
             \left(\sum_{i\in B}\beta_i h_i\right)
       +x^{2}\otimes
             \left(\sum_{i\in B}\delta_i h_i\right)\\
       &+\sum_{i\in S\setminus B}
          \bigl(\pi_1(h_i)\otimes\pi_2(h_i)\bigr)\otimes h_i.
\end{aligned}
\]
Since $x^{2}\in K$ and every grouped matrix in the last sum
belongs to $K$, comparing this with \Cref{eq:partial-projection} gives
\[
    u_3=\sum_{i\in B}\beta_i h_i.
\]

For each $i\in B$, \Cref{eq:outside-double-line} is an equality
of nonzero rank-one matrices whose right-hand side has first factor
$x$. Therefore
\[
    \pi_1(h_i)\in\kk x.
\]
Since $u_3\in V$ and $\pi_1$ fixes $V$, applying $\pi_1$ to the
preceding expression for $u_3$ yields
\[
    u_3=\pi_1(u_3)
       =\sum_{i\in B}\beta_i\pi_1(h_i)
       \in\kk x.
\]
This contradicts $u_3\notin\kk x$ and proves the symmetric lower
bound in both cases.
\end{proof}

\section{The explicit construction}\label{sec:grid}

To find a tensor whose symmetric rank is not equal to its rank, it suffices by~\Cref{thm:threshold} to construct a choice of data in~\Cref{def:adjoining} such that $\cA$ contains neither zero nor a symmetric rank-one tensor, but does contain a (non-symmetric) rank one tensor. For our choice of parameters, $\cA$ will contain a symmetric rank two tensor, so the gap between the rank and the symmetric rank is one.

At a high level, our construction proceeds as follows. We first choose a certain collection of three linearly independent vectors
\begin{align*}
a_1,a_2,c\in V=\CC^{9}
\end{align*}
with rational entries, and let $K$ be the span of a certain set of eighteen linearly independent symmetric rank one matrices
\begin{align*}
M_\nu=\lambda_\nu w_\nu^{2}\in S^2V,
\end{align*}
again with rational entries, for which $\cA$ contains neither zero nor any symmetric rank one tensor, and
\begin{align}\label{eq:grid-sums-summary1}
    \sum_{\nu=1}^9 M_\nu
    =a_1^{2}-c^{2}, \qquad
    \sum_{\nu=10}^{18} M_\nu
    =a_2^{2}-c^{2}.
\end{align}
We then set
\[
T_0=a_1^{3}+a_2^{3}+c^{3}.
\]
Note that $\cA$ contains a rank-one tensor: Letting \(s=a_1+a_2+c\), notice that
\begin{align}
T_0-a_1 \otimes \sum_{\nu=1}^{9} M_\nu- a_2 \otimes \sum_{\nu=10}^{18} M_{\nu}
&=
T_0-a_1\otimes(a_1^{2}-c^{2})
   -a_2\otimes(a_2^{2}-c^{2}) \notag\\
&=s\otimes c\otimes c.
\label{eq:ordinary-completion-summary}
\end{align}

Note that $\cA$ also contains a symmetric rank two tensor. Indeed, we have
\[
(a_1+a_2)^3
=4 a_1^3+4a_2^3 - 3(a_1-a_2)(a_1^2-a_2^2).
\]
This observation combined with the inclusion
\[
a_1^2-a_2^2
=
(a_1^2-c^2)-(a_2^2-c^2)\in K
\]
shows that 
\begin{equation}\label{eq:symmetric-completion-summary}
    S:=\frac14(a_1+a_2)^3+c^3 = T_0 - \frac{3}{4}(a_1-a_2)(a_1^2-a_2^2) \in\cA.
\end{equation}

Finally, we let $e_1,\dots, e_{18}$ be a rational basis for $\CC^{18}$, and define
\[
T
=
T_0+\sum_{\nu=1}^{18}\sum_{j=1}^3\iota_j(e_\nu \otimes M_\nu) \in S^3(\CC^{9} \oplus \CC^{18}).
\]
Since \(m=\dim K=18\), \Cref{thm:threshold} gives
\[
\rk_{\CC}(T)=3m+1=55,
\qquad
\srkfield{\CC}(T)=3m+2=56.
\]
Furthermore, the matching decompositions are rational, so the same rank equalities hold
over \(\QQ\) and \(\RR\). Indeed, let $i_{\nu} = 1$ if $1\leq \nu \leq 9$ and let $i_{\nu}=2$ if $10 \leq \nu \leq 18$.
\Cref{eq:ordinary-completion-summary} gives
\[
T
=
s\otimes c \otimes c+\sum_{\nu = 1}^{18}\Bigl[
\iota_1((e_\nu+a_{i_\nu}) \otimes M_\nu)
+\iota_2(e_\nu \otimes M_\nu)
+\iota_3(e_\nu \otimes M_\nu)
\Bigr].
\]
Since each $M_\nu$ is a rational rank one tensor, this is a rational decomposition of
length $1+3\cdot18=55.$

To find a rational symmetric decomposition of length $56$, let
\[
z_\nu
:=
e_\nu+\frac{(-1)^{i_{\nu}-1}}{4}(a_1-a_2),
\]
and recall from~\Cref{eq:grid-sums-summary1,eq:symmetric-completion-summary} that
\[
T_0-S=\frac{3}{4}(a_1-a_2)(a_1^2-a_2^2)=\frac{3}{4} \sum_{\nu=1}^{18} (-1)^{i_{\nu}-1} \lambda_{\nu} (a_1-a_2) w_{\nu}^2.
\]

Then
\begin{align*}
T-S&=(T_0-S)+3\sum_{\nu=1}^{18}\lambda_\nu e_\nu w_\nu^2\\
&=\frac34\sum_{\nu=1}^{18}
(-1)^{i_{\nu}-1} \lambda_\nu(a_1-a_2)w_\nu^2
+3\sum_{\nu=1}^{18}\lambda_\nu e_\nu w_\nu^2\\
&=
3\sum_{\nu=1}^{18}\lambda_\nu
\left(
e_\nu+\frac{(-1)^{i_{\nu}-1}}4(a_1-a_2)
\right)w_\nu^2\\
&=3\sum_{\nu=1}^{18}\lambda_\nu z_\nu w_\nu^2.
\end{align*}
By~\Cref{eq:symmetric-completion-summary}, the tensor $S$ is a sum of two rationally weighted symmetric rank one tensors, and each
term $3z_\nu w_\nu^2$ is a sum of three rationally weighted symmetric rank one tensors by
\[
3zw^2
=
\frac12(w+z)^3-\frac12(w-z)^3-z^3.
\]
Thus $T$ has a rational symmetric decomposition of length $2+3\cdot18=56$.

\subsection{The construction in detail}

From the discussion above, it suffices to construct rational $a_1, a_2, c \in V$ and linearly independent symmetric rank one matrices $M_1,\dots, M_{18} \in S^2(V)$ for which
\begin{enumerate}
\item \Cref{eq:grid-sums-summary1} holds.\label{item:item-one}
\item $\cA$ contains neither zero nor any symmetric rank one tensor.
\end{enumerate}
We now detail our specific construction, and prove that it satisfies these properties.

Let $d_0,d_1,d_2$ be the standard basis of $H=\CC^3$, and let
\begin{equation*}
 w_{rs}=(d_r,-2d_s)\in H\oplus H,
 \qquad G=\spn\{w_{rs}:0\le r,s\le2\}.
\end{equation*}
Writing $\sigma(u)=u_0+u_1+u_2$, we have
\begin{equation}\label{eq:grid-hyperplane}
 G=\{(u,v):2\sigma(u)+\sigma(v)=0\},\qquad \dim G=5.
\end{equation}
Let
\[
 \alpha=d_0+d_1-2d_2,
 \qquad (\alpha_0,\alpha_1,\alpha_2)=(1,1,-2),
\]
and let
\begin{equation*}
 a:=(\alpha,-\alpha)\in G,\qquad c:=(\alpha,\alpha)\in G.
\end{equation*}
Let
\[
 K_G:=\spn\{w_{rs}^{2}:0\le r,s\le2\}\subseteq S^2G.
\]

Now let $E\subseteq G$ be the orthogonal complement to $c$ (or any complement to $\CC c$ with a rational basis), so that
$G=\CC c\oplus E$. Let $E_1,E_2$ be two copies of $E$. Define
\begin{equation*}
 V:=\CC c\oplus E_1\oplus E_2.
\end{equation*}
For $i=1,2$, identify $G$ with the subspace
$G_i:=\CC c\oplus E_i\subseteq V$ by sending $tc+e$ to $tc+e_i$,
where $e_i$ denotes the copy of $e\in E$ in $E_i$. Thus
\[
 G_1\cap G_2=\CC c,\qquad \dim V=9.
\]
Let $a_i,w_{i,r,s}$ be the corresponding copies of $a,w_{r,s}$ in
$G_i$, and put
\begin{equation*}
 T_0:=a_1^{3}+a_2^{3}+c^{3},
 \qquad K:=K_1+K_2,
\end{equation*}
where $K_i\subseteq S^2G_i$ is the copy of $K_G$.
Note that the vectors $a_1,a_2,c$ are linearly independent.

For $\nu=(i,r,s)\in I:=\{1,2\}\times\{0,1,2\}^2$, let
\[
 i_\nu:=i,\qquad w_\nu:=w_{i,r,s},\qquad
 \lambda_\nu:=\alpha_r\alpha_s,\qquad
 M_\nu:=\lambda_\nu w_\nu^{2}.
\]
Let $Z=\CC^{I}$ with standard basis $(e_\nu)_{\nu\in I}$.


We first verify that the matrices $M_{\nu}$ indeed form a basis for $K$, and that~\Cref{item:item-one} holds.

\begin{lemma}\label{lem:grid-facts1}
The 18 matrices $w_{irs}^2$ form a basis for $K$, and
\begin{equation*}
    \sum_{r,s=0}^2 M_{(i,r,s)}
    =a_i^{2}-c^{2} \qquad \text{for}\qquad i=1,2.
\end{equation*}
\end{lemma}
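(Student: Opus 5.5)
The proof has two parts: the sum identity, which is a direct computation in $G$, and the basis statement, which is a dimension count.

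\textbf{The sum identity.} It suffices to work inside a single copy of $G$ and then transport the result along the identification $G\cong G_i$. That identification sends $a\mapsto a_i$, $c\mapsto c$ and $w_{rs}\mapsto w_{i,r,s}$, so it also sends squares to squares. In $H\oplus H$ we have $w_{rs}=(d_r,0)-2(0,d_s)$, hence
\[
w_{rs}^2=(d_r,0)^2-4(d_r,0)(0,d_s)+4(0,d_s)^2 .
\]
We weight by $\alpha_r\alpha_s$ and sum over $r,s$, using $\sigma(\alpha)=\sum_r\alpha_r=0$:
\begin{itemize}
\item In the first term, the factor $\sum_s\alpha_s=0$ kills the sum.
\item In the third term, the factor $\sum_r\alpha_r=0$ kills the sum.
\item The cross term survives and gives $-4(\alpha,0)(0,\alpha)$.
\end{itemize}
On the other side, with $a=(\alpha,-\alpha)$ and $c=(\alpha,\alpha)$, we have
\[
a^2-c^2=(a-c)(a+c)=(0,-2\alpha)(2\alpha,0)=-4(\alpha,0)(0,\alpha).
\]
The two expressions agree, which proves $\sum_{r,s}M_{(i,r,s)}=a_i^2-c^2$.

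\textbf{The basis statement in one copy.} First I show that the nine squares $w_{rs}^2$ are linearly independent in $S^2(H\oplus H)$. Extract the $H\otimes H$ cross block of $w_{rs}^2$, namely the mixed component $-2(d_r\otimes d_s+d_s\otimes d_r)/2$ placed in the off-diagonal block. In the identification $S^2(H\oplus H)\supseteq H\otimes H$ (the off-diagonal part), this is $-2\,d_r\otimes d_s$. These nine tensors are linearly independent in $H\otimes H$, so the $w_{rs}^2$ are independent and $\dim K_G=9$.

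\textbf{Combining the two copies.} The main point is that $K_1+K_2$ is direct, so that $\dim K=18$; this is also the step I expect to need the most care. Since $K_i\subseteq S^2G_i$, it is enough to show $K_1\cap K_2\subseteq S^2G_1\cap S^2G_2=S^2(G_1\cap G_2)=\CC c^2$. The middle equality holds because the intersection of symmetric squares of subspaces is the symmetric square of their intersection; one sees this by choosing a basis adapted to $G_1\cap G_2\subseteq G_1,G_2\subseteq V$.

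So it remains to show $c^2\notin K_G$. Suppose $c^2=\sum\mu_{rs}w_{rs}^2$. Compare off-diagonal blocks: the block of $c^2$ is $\alpha\otimes\alpha$ (up to the same normalization as above), so $-2\mu_{rs}=\alpha_r\alpha_s$. Now compare the $(d_r,0)^2$ block. On the right it equals $\sum_{r}\bigl(\sum_s\mu_{rs}\bigr)(d_r,0)^2$, and $\sum_s\mu_{rs}=-\tfrac12\alpha_r\sigma(\alpha)=0$, so the right side is zero. On the left, the first block of $c^2$ is $(\alpha,0)^2\neq0$. This is a contradiction, so $K_1\cap K_2=0$.

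Therefore the 18 matrices $w_{i,r,s}^2$ are linearly independent and span $K$. Since $\lambda_\nu=\alpha_r\alpha_s\neq0$ for every $\nu$, the rescaled matrices $M_\nu$ also form a basis of $K$.
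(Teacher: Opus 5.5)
Your proof is correct and follows essentially the same route as the paper: you get independence of the nine squares from their off-diagonal blocks, then use $S^2G_1\cap S^2G_2=\CC c^2$ and $c^2\notin K_G$, and obtain the weighted sum by cancellation via $\sigma(\alpha)=0$. The one minor difference is in showing $c^2\notin K_G$: the paper notes that every element of $K_G$ has a diagonal upper-left block while that of $c^2$ is the non-diagonal matrix $\alpha\alpha^{\mathsf T}$, so your step of first solving for the coefficients $\mu_{rs}$ is correct but not needed.
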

\begin{proof}
Note that
\[
w_{rs}^{2}
=
\begin{pmatrix}
d_r d_r^{\mathsf T}
&
-2 d_r d_s^{\mathsf T}
\\[2pt]
-2 d_s d_r^{\mathsf T}
&
4 d_s d_s^{\mathsf T}
\end{pmatrix}.
\]
The upper-right blocks are linearly independent, hence the matrices $w_{rs}^{2}$ are linearly independent. Note also that every matrix in $K_G$ has diagonal upper-left and lower-right blocks,
whereas both of these blocks of $c^2$ are
$\alpha\alpha^{\mathsf T}$, which is not diagonal. Hence
$c^2\notin K_G$.

Since $G_i=\CC c\oplus E_i$, we have
\[
    S^2G_1\cap S^2G_2=\CC c^2,
\]
so $K_1\cap K_2\subseteq \CC c^2$. Since $c^2$ is not contained in $K_1$ nor $K_2$, we have $K_1\cap K_2=\{0\}$. Hence, the union of the two bases $\{w_{irs}^2\}_{r,s} \subseteq K_i$ forms a basis for $K$.

Finally, since $\sum_r\alpha_r=0$, we obtain
\begin{align}\label{eq:moment}
    \sum_{r,s=0}^2\alpha_r\alpha_s w_{rs}^2
    =
    \begin{pmatrix}
        0 & -2\alpha\alpha^{\mathsf T}\\
        -2\alpha\alpha^{\mathsf T} & 0
    \end{pmatrix}
    =a^2-c^2,
\end{align}
which implies
\[
    \sum_{r,s=0}^2 M_{(i,r,s)}=a_i^2-c^2,
    \qquad i=1,2,
\]
as required.
\end{proof}

It remains only to prove the following.
\begin{theorem}\label{thm:no-cube}
Let $T_0 \in S^3V$ and $K \subseteq S^2V$ be as above. Then the affine space $\cA=T_0+J(K) \subseteq V^{\otimes 3}$ contains neither zero nor a symmetric
rank-one tensor.
\end{theorem}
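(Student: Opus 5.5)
The first step is to reduce to a statement about symmetric cubics. A symmetric element of $\cA$ has the form $T_0+X$ with $X\in J(K)\cap S^3V=VK$. So it suffices to show that
\[
T_0-h^3\notin VK\qquad\text{for every }h\in V,
\]
where $h=0$ handles the ``zero'' case and $h\neq0$ handles the ``symmetric rank one'' case. I would pass to the dual (apolarity) description. Let $VK^\perp\subseteq S^3V^*$ be the space of cubic forms $\varphi$ with $\varphi(v,w_\nu,w_\nu)=0$ for all $v\in V$ and all $\nu$. Then the condition $T_0-h^3\in VK$ says that
\[
\varphi(h,h,h)=\varphi(T_0)\qquad\text{for all }\varphi\in VK^\perp.
\]
The aim is to show that this system of cubic equations in $h$ has no solution. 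Its constant term is $\varphi(T_0)=\varphi(a_1,a_1,a_1)+\varphi(a_2,a_2,a_2)+\varphi(c,c,c)$.

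To keep the computation small, I would use functoriality. Any linear map $f:V\to V'$ sends $VK$ into $V'f(K)$, where $f(K)$ is spanned by the squares $f(w_\nu)^2$. Take $f_1:V\to G$ to be the identity on $G_1$ and zero on $E_2$. It sends $K_1$ onto $K_G$, and it sends $K_2$ into $\CC c^2$, since $f_1(w_{2,r,s})$ is the $c$-component of $w_{2,r,s}$. Define $f_2$ symmetrically. Applying $f_i$ turns the membership $T_0-h^3\in VK$ into
\[
a^3+\beta^3c^3+c^3-f_i(h)^3\in G\bigl(K_G+\CC c^2\bigr)\subseteq S^3G .
\]
Here $\beta c$ is the $c$-component of the other copy of $a$, which is computable from $\alpha$. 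Note that $\dim S^3G=35$, so this is a small explicit problem. I would compute the quotient $S^3G/G(K_G+\CC c^2)$, or its annihilator, over $\QQ$, using the grid structure of the $w_{rs}$ and the hyperplane description~\eqref{eq:grid-hyperplane}. I expect the quotient to be small: $K_G+\CC c^2$ has dimension $10$, and the multiplication map $G\otimes(K_G+\CC c^2)\to S^3G$ has many relations. The target is to show that the cubic conditions force $f_i(h)$ into a very restricted set, for instance $f_i(h)\in\CC a$ modulo something, or a finite list of points. If the projected problems are already inconsistent, then the proof is finished.

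If the projections are not enough, the fallback is to work directly in $V$ ($\dim S^3V=165$). I would compute an explicit rational basis of $VK^\perp$ and feed in the constraints on the $G_1$- and $G_2$-components of $h$ obtained above. For $h=0$, the task is to exhibit one $\varphi\in VK^\perp$ with $\varphi(T_0)\neq0$; one candidate is a form built from the non-diagonal blocks, in the same way that $c^2\notin K_G$ was detected in \Cref{lem:grid-facts1}. For $h\neq0$, I would substitute $h=tc+e_1+e_2$ and write out the equations coming from several well-chosen $\varphi$. These should force $t$, $e_1$ and $e_2$ to satisfy incompatible equations, for example requiring both $e_1$ and $e_2$ to be proportional to the $E$-part of $a$ with cube-root coefficients that conflict with the $c^3$ coefficient. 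As a certificate, I would check with a Gröbner basis computation over $\QQ$ that the ideal generated by $\{\varphi(h,h,h)-\varphi(T_0)\}$ contains $1$.

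The main obstacle is the $h\neq0$ case. It is a nonlinear (cubic) system and cannot be dismissed by linear algebra alone. The dangerous configurations are exactly those that nearly work: the rank-two completion $S=\tfrac14(a_1+a_2)^3+c^3$ and the non-symmetric completion $s\otimes c\otimes c$ show that $\cA$ comes close to containing a cube. The argument therefore has to use the asymmetry between the two copies $G_1,G_2$ glued along $\CC c$. I would first try to extract from the two projected problems linear consequences on $h$ (forcing $h\in\spn\{a_1,a_2,c\}$), and only then solve the remaining three-variable cubic system by hand.
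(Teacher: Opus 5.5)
Your reduction to showing $T_0-h^3\notin VK$ for all $h\in V$ is correct, and so is its dual reformulation via $VK^\perp$. There is, however, a genuine gap: the mechanism you propose for deciding this system cannot work, and what remains is an unexecuted computer search rather than a proof.

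The projections $f_i$ destroy exactly the information you need. By \eqref{eq:moment} we have $a^2-c^2\in K_G$, so $a^3=a(a^2-c^2)+ac^2\in G(K_G+\CC c^2)$. Trivially $c^3\in G(K_G+\CC c^2)$ as well. Hence $f_i(T_0)\in G\,f_i(K)$ for every choice of complement $E$. (Also $f_i(K)=K_G+\CC c^2$ exactly: the $w_{rs}$ span $G\ni c$, so they do not all lie in $E$.)

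Your projected condition is therefore just $f_i(h)^3\in G(K_G+\CC c^2)$. This holds for $h=0$, so the projections cannot even show $0\notin\cA$. It also holds whenever $f_i(h)\in\spn\{a,c\}$ or $f_i(h)\in\spn\{w_{rs},c\}$. For example, it holds for $h=a_1+a_2$ and for $h=w_{1,r,s}+w_{2,r',s'}$. So no linear consequence such as $h\in\spn\{a_1,a_2,c\}$ can be extracted from them.

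Your fallback, a Gr\"obner certificate that $1$ lies in the ideal, would be a legitimate computer-assisted proof if actually produced, since the statement is true. But you do not produce it. Nor do you give an explicit $\varphi\in VK^\perp$ with $\varphi(T_0)\neq0$; ``a form built from the non-diagonal blocks'' is a hope, not an argument.

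The missing idea is the right choice of linear maps in your functoriality step. The maps must see both copies $G_1,G_2$ at once, and they must make the image of $K$ diagonal. The paper uses maps $R_1,R_2,R_3:V\to H$ built from the coordinate projections $\operatorname{pr}_1,\operatorname{pr}_2$, combined differently on $G_1$ and $G_2$ and agreeing on $c$.

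Each $R_j$ sends every $w_{irs}$ to a multiple of a basis vector, so $R_j^{\otimes2}(K)\subseteq\spn\{d_0^2,d_1^2,d_2^2\}$, while $R_j^{\otimes3}(T_0)=\alpha^3$. Hence the functional $X\mapsto(R_j^{\otimes3}X)_{0,1,2}$ vanishes on $J(K)$ and equals $\alpha_0\alpha_1\alpha_2=-2$ on all of $\cA$. This disposes of $0\in\cA$ immediately. It also turns $h^3\in\cA$ into the three equations $\prod_k(R_jh)_k=-2$.

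These equations are then solved structurally, not by elimination:
\begin{enumerate}
\item The identities $R_3=-R_1$ on $G_1$ and $R_3=-R_2$ on $G_2$ rule out $h\in G_1\cup G_2$.
\item \Cref{lem:mixed}, using \Cref{lem:grid-facts} and the $S^2E_1E_2$ and $cE_1E_2$ components of $h^3$, forces $h=h_1+h_2$ with $h_i\in\CC a_i\cup\bigcup_{r,s}\CC w_{irs}$.
\item This leaves three short cases.
\end{enumerate}
This reduction necessarily keeps the $w$-directions. The case $h=\gamma_1w_{1,r,s}+\gamma_2w_{2,r',s'}$ is killed only by $R_3$, which sees both copies simultaneously and which your projections $f_i$ cannot detect.
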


We now give an overview of the proof of this theorem. The goal is to prove that $h^3 \notin \cA$ for any vector $h \in V$. We assume toward contradiction that $h^3 \in \cA$. We first establish that if $h\notin G_1\cup G_2$ then one can write $h=h_1+h_2$, where each $h_i$ is a nonzero multiple of $a_i$ or some $w_{irs}$~(\Cref{lem:mixed}). We then construct three linear maps $R_1,R_2,R_3:V\to H$ for which $(R_j^{\otimes3}P)_{0,1,2}=-2$ for every $P\in\cA$ and $j=1,2,3$, immediately showing that $0\notin \cA$. Hence, if $h^3 \in \cA$, then $(R_jh)_0(R_jh)_1(R_jh)_2=-2$ for all $j=1,2,3$. These equations rule out $h\in G_1\cup G_2$ and each of the three remaining cases: both $h_i$ are multiples of the $a_i$, exactly one is, or both are multiples of the $w_{irs}$.

\subsubsection{Lemmas for the proof of~\Cref{thm:no-cube}}

We require two lemmas to prove~\Cref{thm:no-cube}.

\begin{lemma}\label{lem:grid-facts}
It holds that $K_G\cap cG=0$, and for $h \in G$ it holds that
\begin{equation*}
 h^2\in K_G+\CC c^2
 \quad\Longleftrightarrow\quad
 h\in\CC a\cup\CC c\cup\bigcup_{r,s}\CC w_{rs}.
\end{equation*}
\end{lemma}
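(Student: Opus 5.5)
Everything is computed in block coordinates: write a symmetric $6\times 6$ matrix on $H\oplus H$ as $\begin{pmatrix}A & B\\ B^{\mathsf T} & C\end{pmatrix}$ with $A,C$ symmetric $3\times 3$. By the block formula in the proof of \Cref{lem:grid-facts1}, a general element of $K_G$ is $\sum_{r,s}t_{rs}w_{rs}^2$. Its upper-left block is $A=\operatorname{diag}(\sum_s t_{rs})_r$, its lower-right block is $C=4\operatorname{diag}(\sum_r t_{rs})_s$, and its off-diagonal block is $B=-2(t_{rs})$. So $K_G$ is exactly the set of symmetric matrices whose diagonal blocks are diagonal and are determined by $B$ through the row and column sums: $A=-\tfrac12\operatorname{diag}(\text{row sums of }B)$ and $C=-2\operatorname{diag}(\text{column sums of }B)$.

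\emph{First claim.} Write $cg=\tfrac12(c\otimes g+g\otimes c)$ with $g=(p,q)\in G$ and $c=(\alpha,\alpha)$. The upper-left block of $cg$ is $\tfrac12(\alpha p^{\mathsf T}+p\alpha^{\mathsf T})$. For this to be diagonal, its off-diagonal entries must vanish: $\alpha_r p_s+\alpha_s p_r=0$ for $r\ne s$. With $\alpha=(1,1,-2)$ these three equations force $p=0$. The same argument on the lower-right block gives $q=0$. Hence $K_G\cap cG=0$.

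\emph{Direction $\Leftarrow$.} We have $w_{rs}^2\in K_G$ and $c^2\in K_G+\CC c^2$ trivially. For $a$, \Cref{eq:moment} gives $a^2=c^2+\sum\alpha_r\alpha_s w_{rs}^2$.

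\emph{Direction $\Rightarrow$.} Let $h=(u,v)\in G$ and suppose $h^2=Q+\mu c^2$ with $Q\in K_G$. Since $c^2$ has diagonal blocks $\alpha\alpha^{\mathsf T}$, this means:
\begin{enumerate}
\item $uu^{\mathsf T}-\mu\alpha\alpha^{\mathsf T}$ and $vv^{\mathsf T}-\mu\alpha\alpha^{\mathsf T}$ are diagonal;
\item with $B=uv^{\mathsf T}-\mu\alpha\alpha^{\mathsf T}$, we have $uu^{\mathsf T}-\mu\alpha\alpha^{\mathsf T}=-\tfrac12\operatorname{diag}(B\mathbf 1)$ and $vv^{\mathsf T}-\mu\alpha\alpha^{\mathsf T}=-2\operatorname{diag}(\mathbf 1^{\mathsf T}B)$.
\end{enumerate}
Condition (i) says $u_ru_s=\mu\alpha_r\alpha_s$ for all $r\neq s$, and likewise for $v$.

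\emph{Case $\mu=0$.} Then $u$ and $v$ each have at most one nonzero entry. Condition (ii) gives $u_r^2=-\tfrac12u_r\sigma(v)$ and $v_s^2=-2v_s\sigma(u)$.
\begin{itemize}
\item If $u=d_r$-multiple $xd_r$ and $v=yd_s$, then $x=-\tfrac12 y$, so $h\in\CC w_{rs}$.
\item If exactly one of $u,v$ is zero, the equations force the other to vanish too.
\end{itemize}
(The hyperplane condition \Cref{eq:grid-hyperplane} is consistent with this.)

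\emph{Case $\mu\ne0$.} The products $u_0u_1=\mu$, $u_0u_2=u_1u_2=-2\mu$ force $u_0=u_1$, then $u_2=-2u_0$ and $u_0^2=\mu$; so $u=\epsilon\sqrt\mu\,\alpha$. Similarly $v=\epsilon'\sqrt\mu\,\alpha$. Then $B=\mu(\epsilon\epsilon'-1)\alpha\alpha^{\mathsf T}$ has zero row and column sums because $\sigma(\alpha)=0$. So (ii) requires the diagonals of $uu^{\mathsf T}-\mu\alpha\alpha^{\mathsf T}$ to vanish, which holds automatically. This gives $h=\pm\sqrt\mu(\alpha,\pm\alpha)\in\CC a\cup\CC c$.

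The main work is the careful bookkeeping of condition (ii) in the case $\mu=0$, namely checking that no degenerate solutions survive. This is routine once the explicit description of $K_G$ above is in place.
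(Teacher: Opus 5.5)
Your proof is correct and is essentially the paper's argument. You use the same off-diagonal-entry argument for $K_G\cap cG=0$, the same converse via \Cref{eq:moment}, and the same relations $u_ju_k=\mu\alpha_j\alpha_k$ in the case $\mu\neq0$. The one difference is in the case $\mu=0$: the paper gets $\gamma=-2\tau$ directly from the hyperplane equation \Cref{eq:grid-hyperplane} (using $h\in G$), whereas you use the row/column-sum description of $K_G$; both work, and your check of condition (ii) when $\mu\neq0$ is not needed for the forward direction.
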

\begin{proof}
Let $h=(u,v)\in G$ be arbitrary. For the first statement, it suffices to prove that $(ch \in K_G \Rightarrow h=0)$. Recall that every matrix in $K_G$ has diagonal upper-left and lower-right
blocks. The upper-left block of $ch$ is given by
\[
\frac12\bigl(\alpha u^{\mathsf T}+u\alpha^{\mathsf T}\bigr).
\]
If $ch \in K_G$, then the off-diagonal elements of this block are zero, meaning $\alpha_j u_k + u_j \alpha_k=0$ for all $j \neq k$. Since $\alpha_j \neq 0$ for all $j$, this implies $u=0$. An analogous argument for the lower-right blocks shows $v=0$. This proves the first statement.

For the second statement, if $h^2-\mu c^2\in K_G$, then
%
%
\[
 u_ju_k=\mu\alpha_j\alpha_k,
 \qquad v_jv_k=\mu\alpha_j\alpha_k
\]
for all $j \neq k$. When $\mu\ne0$, this implies that the ratios $u_j/\alpha_j$ are all equal to some $t\in \CC$. Similarly, $v=s\alpha$, with
$t^2=s^2=\mu$. Thus $h=tc$ or $h=ta$. When $\mu=0$, $u$ and $v$ have at most one nonzero coordinate each.
Write $u= \tau d_r$, $v= \gamma d_s$. \Cref{eq:grid-hyperplane} gives $\gamma =-2 \tau$, so $h=\tau w_{rs}$.
The converse follows from \Cref{eq:moment} and the definition of $K_G$.
\end{proof}

\begin{lemma}\label{lem:mixed}
If $h \in V$ satisfies
\[
 h\notin G_1\cup G_2,\qquad h^3\in S^3 G_1 + S^3 G_2 + E_2 K_1+ E_1 K_2,
\]
then we can write $h=h_1+h_2$, where
\[
h_i \in \CC a_i \cup\bigcup_{r,s}\CC w_{irs}\qquad\text{for}\qquad i=1,2.
\]

\end{lemma}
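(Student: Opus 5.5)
Write $V=\CC c\oplus E_1\oplus E_2$ and decompose $h=tc+e_1+e_2$ with $e_i\in E_i$. Since $h\notin G_1\cup G_2$, both $e_1\neq0$ and $e_2\neq 0$. The plan is to expand $h^3$ in the multigrading by degree in $E_1$ and $E_2$, and compare the mixed components with those of the right-hand side $S^3G_1+S^3G_2+E_2K_1+E_1K_2$.

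First, $S^3G_1$ and $S^3G_2$ contribute nothing of bidegree $(p,q)$ with $p,q\ge1$, since $G_i=\CC c\oplus E_i$. For the other two summands, write an element of $K_1\subseteq S^2G_1$ as $\kappa=\kappa_{cc}c^2+c\,\kappa_{c1}+\kappa_{11}$ with $\kappa_{c1}\in E_1$ and $\kappa_{11}\in S^2E_1$. Then $E_2K_1$ contributes to bidegrees $(0,1)$, $(1,1)$ and $(2,1)$, and symmetrically $E_1K_2$ contributes to $(1,0)$, $(1,1)$ and $(1,2)$.

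Now consider the bidegree $(2,1)$ component, i.e.\ the part in $E_1^2E_2$. For $h^3$ it equals $3e_1^2e_2$. On the right-hand side, only $E_2K_1$ contributes to this bidegree: the pieces are $f\cdot\kappa_{11}$ summed over several terms, with $f\in E_2$ and $\kappa\in K_1$. So $e_1^2e_2=\sum_j f_j\,\pi_{11}(\kappa_j)$, where $\pi_{11}\colon S^2G_1\to S^2E_1$ is the projection along $cG_1$.

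To extract information from this identity, contract with a functional $\phi\in E_2^*$ satisfying $\phi(e_2)=1$. This gives $e_1^2\in \pi_{11}(K_1)$, that is, $e_1^2\in K_1+cG_1$. By symmetry, using the $(1,2)$ component, $e_2^2\in K_2+cG_2$.

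The next step is to upgrade these to membership in $K_G+\CC c^2$ for a suitable vector $g_1=e_1+\mu_1c$, so that \Cref{lem:grid-facts} applies. Write $e_1^2=\kappa+c\,g'$ with $g'\in G_1$, and split $g'=\beta c+e'$. Then choose $\mu_1=e'/\ldots$; this requires $e'\propto e_1$, which is not automatic.

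This upgrade is where I expect the main obstacle. More information is needed, and I would get it from the $(1,1)$ component together with the $(2,0)$-type constraints. I would try first to use the $(1,1)$ component, $6t\,c\,e_1e_2$, to control the $c\,\kappa_{c1}$ parts. The $(1,1)$ part of $f\kappa$ for $f\in E_2$ is $f\cdot c\,\kappa_{c1}$. Since $\pi_{11}$ restricted to $K_1$ is injective (this is the fact $K_G\cap cG=0$), the $\kappa_j$ are determined by their $S^2E_1$-parts. Concretely, $\sum_j f_j\kappa_j=\bigl(\text{lift of }e_1^2\bigr)\cdot e_2$ modulo terms killed by $\phi$.

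Comparing the $c\,E_1E_2$ part then yields $\kappa_{c1}=2t e_1$, up to the contribution of $E_1K_2$, which similarly gives $2t e_2$ on the other side. I need to handle the balance between the two: the sum must match $6t$, which splits as $3t+3t$ with the correct normalization. From this, $\kappa=e_1^2+2t\,c\,e_1+\beta c^2$. Hence $(e_1+tc)^2\in K_1+\CC c^2$.

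Once $(e_1+tc)^2\in K_1+\CC c^2$ is established, \Cref{lem:grid-facts} gives $e_1+tc\in\CC a_1\cup\CC c\cup\bigcup_{r,s}\CC w_{1rs}$. The case $\CC c$ is excluded because $e_1\neq0$. Set $h_1:=e_1+tc$ and $h_2:=e_2$. Then I must also show $e_2\in\CC a_2\cup\bigcup_{r,s}\CC w_{2rs}$. Since $e_2$ has no $c$-component, I would rerun the same argument for $e_2$, which gives $(e_2+t'c)^2\in K_2+\CC c^2$. The issue is to show the split of $tc$ between $h_1$ and $h_2$ is consistent; the $(1,1)$ comparison should force the $c$-coefficients of the two pieces to sum to $t$, giving $h=h_1+h_2$.

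The delicate point is this bookkeeping of how $tc$ is distributed between the two sides. I would resolve it by noting that $a_i,w_{irs}$ have prescribed $c$-components relative to $E_i$, since $E$ is the orthogonal complement of $c$.
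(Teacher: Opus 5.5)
\textbf{There is a genuine gap.} Your framework matches the paper's: you grade by $E_1$/$E_2$-degree, extract $e_1^2\in\rho_1(K_1)$ from the $E_1^2E_2$ part (with $\rho_1\colon S^2G_1\to S^2E_1$ your $\pi_{11}$), and then turn to the $cE_1E_2$ part. The failure is at the step you flag as the obstacle. Your conclusion there, $(e_1+tc)^2\in K_1+\CC c^2$, is false in general, and two things are missing.

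First, ``modulo terms killed by $\phi$'' is not enough to read off the $cE_1E_2$ component. You need that $\id\otimes\rho_1\colon E_2\otimes K_1\to E_2\otimes S^2E_1$ is injective, which holds because $\rho_1|_{K_1}$ is. This forces the $E_2K_1$-summand to be exactly $3e_2Q_1$, where $Q_1=e_1^2+2cb_1+\mu_1c^2$ is the unique lift of $e_1^2$ to $K_1$. Likewise the $E_1K_2$-summand is exactly $3e_1Q_2$ with $Q_2=e_2^2+2cb_2+\mu_2c^2$.

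Second, comparing $cE_1E_2$ components then yields only
\[
b_1e_2+e_1b_2=t\,e_1e_2\qquad\text{in } E_1E_2\cong E_1\otimes E_2 .
\]
This does not give $b_1=te_1$, nor any prescribed split of $t$. The missing idea is to apply a functional on $E_1$ that vanishes at $e_1$. Since $e_2\neq0$, this forces $b_1\in\CC e_1$, and symmetrically $b_2\in\CC e_2$. Writing $b_i=t_ie_i$ gives $t_1+t_2=t$. Then $h_i:=e_i+t_ic$ satisfy $h_i^2-Q_i\in\CC c^2$, so \Cref{lem:grid-facts} applies to both pieces at once, and $h_1+h_2=h$ holds automatically. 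The ``bookkeeping'' you worry about is settled by the lifts $Q_1,Q_2$, not by $t$.

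Your choice $h_1=e_1+tc$, $h_2=e_2$ genuinely fails on a concrete example. Take $E=c^\perp$ and $h=a_1+w_{2,0,0}$. This satisfies the hypotheses: expand $h^3$ using $a_1^2\in K_1+\CC c^2$ and $w_{2,0,0}^2\in K_2$.
\begin{itemize}
\item Since $\langle a,c\rangle=0$, we have $e_1=a_1$, and $t=\langle w_{00},c\rangle/\langle c,c\rangle=-1/12$.
\item By \Cref{eq:moment}, the lift of $a_1^2$ is $Q_1=a_1^2-c^2$, so $b_1=0\neq te_1$.
\item $h_1=a_1+tc$ has blocks $\tfrac{11}{12}\alpha$ and $-\tfrac{13}{12}\alpha$, so it is not a multiple of $a_1$ or of any $w_{1rs}$.
\item $h_2=e_2=w_{2,0,0}-tc$ has first block $d_0+\tfrac1{12}\alpha$, so it is not a multiple of $a_2$ or of any $w_{2rs}$.
\end{itemize}
The paper's split here is $t_1=0$, $t_2=t$.

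Your closing appeal to orthogonality cannot rescue this. We have $\langle w_{rs},c\rangle=\alpha_r-2\alpha_s\neq0$ for all $r,s$, so every $w_{irs}$ carries a nonzero $c$-component. In any case, the split $t=t_1+t_2$ is dictated by the lifts in $K_1,K_2$, not by a choice of complement.
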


\begin{proof}
Recall that
\[
    G_i=\CC c\oplus E_i,
    \qquad
    V=\CC c\oplus E_1\oplus E_2.
\]
We may therefore write uniquely
\[
    h=\tau c+x+y,
    \qquad x\in E_1,\quad y\in E_2.
\]
Since $h\notin G_1\cup G_2$, both $x$ and $y$ are nonzero.
By assumption, we can write
\begin{align}\label{eq:5terms}
    h^3=C+B_{21}+B_{12},
\end{align}
where
\[
C \in S^3 G_1 + S^3 G_2, \qquad
    B_{21}\in E_2K_1,\qquad
    B_{12}\in E_1K_2.
\]

\begin{claim}\label{claim:construction}
It holds that $B_{21}=3y Q_1$ and $B_{12}=3x Q_2$, where
\begin{equation*}
    Q_1=x^2+2cb_1+\mu_1c^2 \in K_1
\end{equation*}
for some $b_1\in E_1$ and $\mu_1\in\CC$, and
\begin{equation*}
    Q_2=y^2+2cb_2+\mu_2c^2 \in K_2
\end{equation*}
for some $b_2\in E_2$ and $\mu_2\in\CC$.
\end{claim}

Before proving the claim, we use it to complete the proof of the lemma. Note that we have the direct sum decomposition
\[
\begin{aligned}
S^3V
={}\CC c^3
\oplus c^2E_1
\oplus c^2E_2
&\oplus cS^2E_1
\oplus cE_1E_2
\oplus cS^2E_2\\
&\oplus S^3E_1
\oplus S^2E_1E_2
\oplus E_1S^2E_2
\oplus S^3E_2.
\end{aligned}
\]
The $cE_1E_2$ component of
\[
    h^3=(\tau c+x+y)^3
\]
is given by $6\tau cxy.$ Since the space $S^3 G_1+S^3G_2$ has no $cE_1 E_2$ component, it follows that this component of $h^3$ is also equal to the $cE_1 E_2$ component of $3yQ_1+3xQ_2$, which is given by
\[
    6c(b_1y+xb_2).
\]
Hence,
\begin{equation}\label{eq:mixed-linear}
    b_1y+xb_2=\tau xy.
\end{equation}

Let $\varphi\in (E_1\oplus E_2)^*$ satisfy $\varphi(E_2)=0$ and $\varphi(x)=0$.
Applying $\varphi\otimes\id$ to
\Cref{eq:mixed-linear} gives
\[
    \varphi(b_1)y=0.
\]
Since $y\ne0$, every such $\varphi$ annihilates $b_1$, and therefore
$b_1\in\CC x$. Similarly, $b_2\in\CC y$. Write
\[
    b_1=tx,\qquad b_2=sy
\]
so that $t+s=\tau.$

Define
\[
    h_1=x+tc\in G_1,
    \qquad
    h_2=y+sc\in G_2.
\]
Then
\[
    h_1+h_2=x+y+(t+s)c=h.
\]
Moreover,
\[
\begin{aligned}
    h_1^2-Q_1
      &=(t^2-\mu_1)c^2,\\
    h_2^2-Q_2
      &=(s^2-\mu_2)c^2.
\end{aligned}
\]
Since $Q_i\in K_i$, it follows that
\[
    h_i^2\in K_i+\CC c^2
    \qquad\text{for}\qquad i=1,2.
\]
Finally, $h_1$ has the nonzero $E_1$ component $x$, and $h_2$ has the
nonzero $E_2$ component $y$, so neither vector belongs to $\CC c$. By~\Cref{lem:grid-facts}, each $h_i$ must be of the specified form. It remains only to prove the claim.

\begin{proof}[Proof of~\Cref{claim:construction}]
For $i=1,2$ let
\[
    \rho_i:S^2G_i\longrightarrow S^2E_i
\]
be induced by the projection $G_i\to E_i$ with kernel $\CC c$. Note that
\[
S^2 G_i=\CC c^2 \oplus c E_i \oplus S^2E_i= cG_i \oplus S^2E_i.
\]
Hence,
\[
    \ker\rho_i=cG_i.
\]
By~\Cref{lem:grid-facts} we have $K_i\cap cG_i=0$, hence the restriction
\[
    \rho_i|_{K_i}:K_i\longrightarrow S^2E_i
\]
is injective.

%
%
We first compute $B_{21}$. Note that the $S^2E_1E_2$ component of $h^3$ is $3x^2y$, and among the three terms in~\Cref{eq:5terms}, only $B_{21}$ can contribute to $S^2E_1E_2$. Let
\[
    B_{21}=\sum_j y_jQ_j,
    \qquad y_j\in E_2,\quad Q_j\in K_1.
\]
Then the $S^2E_1E_2$ component of $B_{21}$ is
\[
    \sum_j y_j\rho_1(Q_j).
\]
Since this component must equal $3x^2y$, and
$\rho_1|_{K_1}$ is injective, it follows that
\[
    B_{21}=3yQ_1
\]
for a unique $Q_1\in K_1$ satisfying $\rho_1(Q_1)=x^2$. Thus
\begin{equation*}
    Q_1=x^2+2cb_1+\mu_1c^2
\end{equation*}
for some $b_1\in E_1$ and $\mu_1\in\CC$. The statement for $B_{12}$ is proven similarly.
%
\end{proof}
The proof of the claim completes the proof of the lemma.
\end{proof}

\subsubsection{Proof of~\Cref{thm:no-cube}}

In this section we prove~\Cref{thm:no-cube}.

\begin{proof}[Proof of~\Cref{thm:no-cube}]

Let $\operatorname{pr}_1,\operatorname{pr}_2:G\to H$ be the coordinate projections, and let  $R_1,R_2,R_3:V\to H$ be defined as follows:
\begin{equation*}
 \begin{array}{c|cc}
 &G_1&G_2\\\hline
 R_1&\operatorname{pr}_2&\operatorname{pr}_1\\
 R_2&\operatorname{pr}_1&\operatorname{pr}_2\\
 R_3&-\operatorname{pr}_2&-\operatorname{pr}_2.
 \end{array}
\end{equation*}
The restrictions agree on $c$, so these are well-defined linear maps.
Their values on the vectors $a_1, a_2, c$ are
\begin{equation*}
 \begin{array}{c|ccc}
 &a_1&a_2&c\\\hline
 R_1&-\alpha&\alpha&\alpha\\
 R_2&\alpha&-\alpha&\alpha\\
 R_3&\alpha&\alpha&-\alpha.
 \end{array}
\end{equation*}

Let $P \in \cA$ be arbitrary. Note that $R_j^{\otimes3}(T_0)=\alpha^{3}$ for every $j$. In particular, $R_j^{\otimes3}(T_0)_{0,1,2}=\alpha_0\alpha_1\alpha_2=-2.$
On the other hand, the coordinate projections $\operatorname{pr}_1,\operatorname{pr}_2$ send each
$w_{rs}$ to a multiple of a basis vector, hence  $R_j^{\otimes 2}(K) \subseteq \spn\{d_0^2, d_1^2, d_2^2\}.$ It follows that $R_j^{\otimes3}(D)_{0,1,2}=0$ for all $D \in J(K)$, so $R_j^{\otimes 3}(P)_{0,1,2}=-2$ for every $P \in \cA$. Hence, $0 \notin \cA$.

%

Suppose toward contradiction that there exists a nonzero symmetric rank one tensor $h^{3}\in \cA$. Then
\begin{align}\label{eq:phi-replacement}
R_j^{\otimes 3}(h^{3})_{0,1,2}=-2\quad \text{for} \quad j=1,2,3.
\end{align}
Since $R_3=-R_1$ on $G_1$, and $R_3=-R_2$ on $G_2$, it follows that $h \notin G_1 \cup G_2$.

The difference $h^3-T_0$ is symmetric, so it belongs to
$J(K)\cap S^3V=VK_1+VK_2$. Since $T_0 \in S^3 G_1 + S^3 G_2$, and
\[
VK_1 = G_1 K_1 \oplus E_2 K_1 \subseteq S^3 G_1 + E_2 K_1
\]
(and similarly for $VK_2$), we obtain
\[
h^3 \in S^3G_1 + S^3 G_2 + E_2 K_1+E_1 K_2.
\]
By \Cref{lem:mixed}, we can write $h=h_1+h_2$
where each $h_i$ is a nonzero multiple of either $a_i$ or some $w_{irs}$.
We break this into three cases, obtaining a contradiction in each one.

\emph{Case 1:} If $h=b_1a_1+b_2a_2$ for some $b_1,b_2 \in \CC$, then $R_1h=-R_2h$, contradicting
\Cref{eq:phi-replacement}.

\emph{Case 2:} If $h=ba_2+\gamma w_{1,r,s}$ with $b\gamma\ne0$,
then $R_1h=b\alpha-2\gamma d_s$ and
$R_3h=b\alpha+2\gamma d_s$. Hence
\[
R_3^{\otimes 3}(h^{3})_{0,1,2}-R_1^{\otimes 3}(h^{3})_{0,1,2}
   =4\gamma b^2\prod_{k\ne s}\alpha_k\ne0,
\]
contradicting \Cref{eq:phi-replacement}. The case $h=ba_1+\gamma w_{2,r,s}$ with $b\gamma\ne0$ is proven similarly with $R_1$ replaced by $R_2$.

\emph{Case 3:} If $h=\gamma_1w_{1,r,s}+\gamma_2w_{2,r',s'}$, then
$R_3h=2\gamma_1d_s+2\gamma_2d_{s'}$ has at most two nonzero
coordinates. Thus $R_3^{\otimes 3}(h^{3})_{0,1,2}=0$, contradicting \Cref{eq:phi-replacement}.
\end{proof}

\section*{Appendix}
\appendix

\section{Proof of~\Cref{lem:projections}}\label{sec:substitution}

In this section we prove~\Cref{lem:projections}. We first restate the lemma for convenience.

\begin{lemma}[\Cref{lem:projections}]
Let
\begin{align}\label{eq:decomp}
T= \sum_{i=1}^N a_i\otimes b_i\otimes c_i
\end{align}
be an expression of the symmetrically adjoined tensor $T$ as a sum of $N$ product tensors. Then there exist linear maps
\[
 \pi_j:\widetilde V\longrightarrow V,
 \qquad \pi_j|_V=\id_V,\qquad j=1,2,3,
\]
and pairwise disjoint subsets
\[
I_1, I_2, I_3 \subseteq [N], \qquad |I_j|=m
\]
such that the following statements hold:
\begin{enumerate}
\item $\pi_1 a_i=0$ for all $i \in I_1$, $\quad$ $\pi_2 b_i =0$ for all $i \in I_2$, \quad and \quad $\pi_3 c_i = 0$ for all $i \in I_3$.
\item $(\pi_1 \otimes \pi_2 \otimes \pi_3)(T) \in \cA$.
\item It holds that
\begin{equation*}
 \rk(T)=3m+\min\{\rk(P):P\in\cA\}.
\end{equation*}
\end{enumerate}
\end{lemma}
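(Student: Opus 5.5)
We prove the lemma with the substitution method of \cite[Lemma 2]{HK1971}, applied three times, once per tensor factor. At each stage we kill $m$ terms of the decomposition with a projection that fixes $V$.

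\emph{Step 1 (first factor).} Consider the flattening $T_1:\widetilde V^*\to\widetilde V\otimes\widetilde V$ and restrict it to functionals vanishing on $V$, i.e.\ to $Z^*\cong V^\perp$. Contracting the first factor of $T$ with the dual basis vector $e_\nu^*$ kills $T_0$ and every term with $e_\nu$ in a non-first slot, since $M_\nu\in V\otimes V$. It therefore gives exactly $M_\nu$. As the $M_\nu$ are linearly independent, $T_1(V^\perp)$ has dimension $m$. Writing $T_1(\phi)=\sum_i\phi(a_i)\,b_i\otimes c_i$, the functionals $a_i|_{Z}$ (equivalently $a_i$ modulo $V$) must span an $m$-dimensional space, and $\widetilde V/V\cong Z$ has dimension $m$. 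We choose $I_1$ with $|I_1|=m$ such that the images of $\{a_i\}_{i\in I_1}$ in $\widetilde V/V$ form a basis. Then $V\oplus\spn\{a_i:i\in I_1\}=\widetilde V$, and we let $\pi_1$ be the projection onto $V$ along $\spn\{a_i:i\in I_1\}$. This $\pi_1$ fixes $V$ and kills $a_i$ for $i\in I_1$.

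\emph{Step 2 (the image tensor and the second and third factors).} Set $T':=(\pi_1\otimes\id\otimes\id)T$. Applying $\pi_1$ to $e_\nu$ gives $T'=T_0+\sum_\nu\bigl(\iota_1(\pi_1e_\nu\otimes M_\nu)+\iota_2(e_\nu\otimes M_\nu)+\iota_3(e_\nu\otimes M_\nu)\bigr)$. The second-factor contraction against $V^\perp$ again yields exactly the $M_\nu$, because the first-slot contributions now lie in $V$. So the same argument applies to the remaining indices $[N]\setminus I_1$ and produces $I_2$ and $\pi_2$. This is possible because only the terms $i\notin I_1$ survive in $T'$, so the vectors $b_i$ with $i\notin I_1$ must span $\widetilde V$ modulo $V$. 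Repeating once more gives $I_3$ and $\pi_3$. The sets are pairwise disjoint by construction, which proves (i). For (ii), a direct computation gives
\[(\pi_1\otimes\pi_2\otimes\pi_3)T=T_0+\sum_\nu\sum_j\iota_j(\pi_je_\nu\otimes M_\nu)\in T_0+J(K)=\cA.\]

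\emph{Step 3 (rank formula).} For the lower bound in (iii), take an optimal decomposition with $N=\rk(T)$. The projected tensor $P=(\pi_1\otimes\pi_2\otimes\pi_3)T$ lies in $\cA$ and is a sum of at most $N-3m$ product terms, namely those with $i\notin I_1\cup I_2\cup I_3$. Hence $\min_{P\in\cA}\rk(P)\le\rk(T)-3m$.

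For the upper bound, let $P\in\cA$ with $P=T_0+\sum_\nu\sum_j\iota_j(x_{\nu,j}\otimes M_\nu)$. Then
\[T=P+\sum_\nu\sum_j\iota_j\bigl((e_\nu-x_{\nu,j})\otimes M_\nu\bigr).\]
Each $M_\nu$ is rank one, so the sum contributes $3m$ rank-one terms, and $\rk(T)\le\rk(P)+3m$.

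The main obstacle is bookkeeping in Step 2: one must verify that after applying $\pi_1$ the contraction on the second factor still produces exactly the independent $M_\nu$, with no contamination, and that only the surviving indices contribute. The key point is that all $e_\nu$-dependence of $T$ is linear and confined to a single slot per term.
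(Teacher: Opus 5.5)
Your proposal is correct and follows essentially the same route as the paper: both contract each factor in turn against functionals vanishing on $V$ to show that the $Z$-components of the surviving vectors span $Z$, and the paper's map $\pi_1(v+z)=v-B_1z$ is exactly your projection onto $V$ along $\spn\{a_i:i\in I_1\}$. The rank formula is then proved by the same term count and the same explicit completion $T=P+\sum_\nu\sum_j\iota_j\bigl((e_\nu-x_{\nu,j})\otimes M_\nu\bigr)$.
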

\begin{proof}
We construct the three maps
\(\pi_1,\pi_2,\pi_3:V\oplus Z\to V\) successively. Write
\[
a_i=a_i^V+a_i^Z,
\qquad
a_i^V\in V,\quad a_i^Z\in Z.
\]
We claim that the vectors \(a_1^Z,\ldots,a_N^Z\) span \(Z\).
Indeed, let \(\varphi\in Z^*\), extended by zero on \(V\). Contracting the
first factor of the decomposition in~\Cref{eq:decomp} gives
\[
(\varphi\otimes\id\otimes\id)(T)
=
\sum_{i=1}^N\varphi(a_i^Z)\,b_i\otimes c_i.
\]
On the other hand, from the definition of the adjoined tensor,
\[
(\varphi\otimes\id\otimes\id)(T)
=
\sum_{\nu=1}^m\varphi(e_\nu)M_\nu.
\]
Since \(M_1,\ldots,M_m\) are linearly independent, the latter expression
vanishes only when \(\varphi=0\). Hence no nonzero functional on \(Z\)
annihilates all the \(a_i^Z\), proving that they span \(Z\).

Choose indices \(I_1\subseteq\{1,\ldots,N\}\), with \(|I_1|=m\), such that
$
\{a_i^Z:i\in I_1\}
$
is a basis of \(Z\). There is a unique linear map \(B_1:Z\to V\) satisfying $B_1(a_i^Z)=a_i^V$ for all $i \in I_1$. Define
\[
\pi_1:V\oplus Z\to V,
\qquad
\pi_1(v+z)=v-B_1z.
\]
Then \(\pi_1|_V=\id_V\), and $\pi_1(a_i)=0$ for all $i \in I_1$.

We now repeat the same argument in the second factor. Since \(\pi_1\) fixes
\(V\), we have
\[
(\pi_1\otimes\id\otimes\id)(T)
=
T_0
+\sum_{\nu=1}^m \iota_1(\pi_1(e_\nu) \otimes M_\nu)
+\sum_{\nu=1}^m \iota_2(e_\nu \otimes M_\nu)
+\sum_{\nu=1}^m \iota_3( e_\nu \otimes M_\nu).
\]
Hence, if \(\varphi\in Z^*\) (extended by zero on $V$) is applied in the second factor, the result is
again
\[
\sum_{\nu=1}^m\varphi( e_\nu)M_\nu.
\]
The same argument as above shows that the \(Z\)-components $b_i^Z$, \(i\notin I_1\) span \(Z\).
We may therefore choose a set
\[
I_2\subseteq\{1,\ldots,N\}\setminus I_1,
\qquad |I_2|=m,
\]
and a map \(\pi_2:V\oplus Z\to V\), with
\(\pi_2|_V=\id_V\), such that $\pi_2(b_i)=0$ for all $i \in I_2$.

Finally, the same argument in the third factor shows that the \(Z\)-components
of the remaining \(c_i\)'s span \(Z\). Hence there is a third set
\[
I_3\subseteq\{1,\ldots,N\}\setminus(I_1\cup I_2),
\qquad |I_3|=m,
\]
and a map \(\pi_3:V\oplus Z\to V\), fixing \(V\), such that $\pi_3(c_i)=0$ for all $i \in I_3$. Consequently,
\[
(\pi_1\otimes\pi_2\otimes\pi_3)(T)
=
\sum_{i\notin I_1\cup I_2\cup I_3}
\pi_1(a_i)\otimes\pi_2(b_i)\otimes\pi_3(c_i),
\]
which has rank at most \(N-3m\).

Since every \(\pi_j\) fixes \(V\), applying the three maps to the defining
formula for \(T\) gives
\[
(\pi_1\otimes\pi_2\otimes\pi_3)(T)
=
T_0+\sum_{\nu=1}^m\sum_{j=1}^3
\iota_j(\pi_j( e_\nu) \otimes M_\nu).
\]
Each \(\pi_j( e_\nu)\) belongs to \(V\), so this tensor lies in $\cA=T_0+J(K)$. It follows that
\[
N-3m
\ge
\min\{R(P):P\in \cA\},
\]
so
\[
R(T)\ge
3m+\min\{R(P):P\in \cA\}.
\]

For the reverse inequality, choose
\(P\in \cA\) of minimum rank. Write
\[
P-T_0
=
\sum_{\nu=1}^m\sum_{j=1}^3
\iota_j(d_{\nu,j} \otimes M_\nu),
\qquad d_{\nu,j}\in V.
\]
Then
\[
T
=
P+\sum_{\nu=1}^m\sum_{j=1}^3
\iota_j( (e_\nu-d_{\nu,j}) \otimes M_\nu).
\]
Each \(M_\nu\) has rank one, so every tensor in the double sum is simple.
Therefore
\[
R(T)\le R(P)+3m.
\]
This completes the proof.
\end{proof}

\begingroup
\small
\setlength{\bibsep}{3pt}
\bibliographystyle{rankpaper}
\bibliography{references}
\endgroup
\end{document}